\newcommand{\fQ}{\mathbb{Q}}
\newcommand{\fZ}{\mathbb{Z}}
\newcommand{\fN}{\mathbb{N}}
\theoremstyle{plain}
\newtheorem{stam}{STAM}[section]
\newtheorem{lem}[stam]{Lemma}
\newtheorem{thm}[stam]{Theorem}
\newtheorem{prop}[stam]{Proposition}
\newtheorem{cor}[stam]{Corollary}
\newtheorem*{lem*}{Lemma}
\newtheorem*{thm*}{Theorem}
\newtheorem*{prop*}{Proposition}
\newtheorem*{claim*}{Claim}
\newtheorem*{cor*}{Corollary}
\newtheorem*{conj*}{Conjecture}
\newtheorem*{obs*}{Observation}
\theoremstyle{definition}
\newtheorem{definition}[stam]{Definition}
\newtheorem*{definition*}{Definition}
\newtheorem*{notation*}{Notation}
\theoremstyle{remark}
\newtheorem{rem}[stam]{\textbf{Remark}}
\newtheorem*{rem*}{\textbf{Remark}}
\newtheorem*{exercise*}{\textbf{Exercise}}
\numberwithin{equation}{section}
\newcommand{\norm}[1]{\left\vert #1 \right\vert}
\newcommand{\set}[1]{\left\{ #1 \right\}}
\newcommand{\floor}[1]{\lfloor #1 \rfloor}
\newcommand{\tuple}[1]{\langle #1 \rangle}
\DeclareMathOperator{\sign}{sign}
\begin{document}
\title{Effective Martingales with Restricted Wagers}

\author{Ron Peretz\thanks{Supported in part by the Google Inter-university Center for Electronic Markets and Auctions.\\ronprtz@gmail.com}}
\affil{London School of Economics}
\maketitle

\begin{abstract}
The classic model of computable randomness considers martingales that take real or rational values. Recent work by \cite{teutsch-etal2012} and \cite{teutsch2013} shows that fundamental features of the classic model change when the martingales take integer values. 

We compare the prediction power of martingales whose wagers belong to three different subsets of rational numbers: (a) all rational numbers, (b) rational numbers excluding a punctured neighborhood of 0, and (c) integers. We also consider three different success criteria: (i) accumulating an infinite amount of money, (ii) consuming an infinite amount of money, and (iii) making the accumulated capital oscillate.

The nine combinations of (a)--(c) and (i)--(iii) define nine notions of computable randomness. We provide a complete characterization of the relations between these notions, and show that they form five linearly ordered classes.  

Our results solve outstanding questions raised in \cite{teutsch-etal2012}, \cite{chalcraft12}, and \cite{teutsch2013}, and strengthen existing results.
\end{abstract}

\section{Introduction}
\subsection{Restricted wagers and effective prediction}

A binary sequence that follows a certain pattern can serve as a test for the sophistication of gamblers. Only martingales that are sufficiently ``smart'' should be able to recognize the pattern and exploit it. Conversely, a martingale (or a class of martingales) can serve as a test for predictability. A predictable sequence is one that can be exploited by that martingale (or class of martingales). When we consider the class of all recursive martingales, unpredictable sequences are called computably random.

Following our intuitive notion of randomness, when a martingale (or a countable class of martingales) bets against the bits of a random binary sequence, its accumulated capital should (almost surely) converge to a finite value. So, a ``predictable'' sequence should be defined as one on which that martingale (or at least one martingale in that class) does not converge. Not converging divides into two cases: going to infinity, and oscillating. The former is used as the \emph{success criterion} in the classic definition of ``computable randomness''; we call it $\infty$-\textsc{gains}. The latter we call \textsc{oscillation}. A third, economically appealing, success criterion requires that the martingale specify a certain amount to be consumed at each turn and the accumulated consumption go to infinity. A martingale together with a consumption function describe a \emph{supermartingale}. We call the success of a supermartingale $\infty$-\textsc{consumption}.

Things are not very interesting unless restricted wagers are introduced. It turns out that the above three success criteria are equivalent when all rational-valued martingales\footnote{Some papers consider computable real-valued martingales. These have the same prediction power as computable rational-valued martingales by simple approximation.} are allowed. Matters become more involved when the wagers of martingales are restricted to subsets of the rationals. We consider three sets of wagers: $\fQ$, $V=\{x\in\fQ:\ |x|\geq 1\text{ or }x=0\}$, and $\fZ$. The wager sets together with the success criteria form nine \emph{predictability classes}. A complete characterization of the relations between these classes is given (see Figure~\ref{fig summary}).

\subsection{Relations to existing literature}
Computable randomness was introduced by \cite{schnorr71}. For background, see also \cite{downey-online}, \cite{downey-book}, or \cite{nies-book}. 
The present paper is motivated by refinements of the notion of computable randomness recently introduced by \cite{teutsch-etal2012}, \cite{chalcraft12}, and \cite{teutsch2013}. The main notion of computable randomness is $\infty$-\textsc{gains}. Other well-studied success criteria include those of \cite{schnorr71} and \cite{kurtz81}. The less familiar success criteria, $\infty$-\textsc{consumption} and \textsc{oscillation}, turned out to be equivalent to the main notion of computable randomness (as mentioned above) and became folklore. When \cite{teutsch-etal2012} introduced integer-valued martingales some of the folklore criteria gained renewed interest. 

\cite{teutsch-etal2012} showed that rational- and integer-valued martingales (rational and integer martingales, for short) are different with respect to $\infty$-\textsc{gains}. Section~\ref{sec casino} provides an alternative elementary proof. Theorem~\ref{thm R gain not to V gain} shows that the separation can be done with a very simple history-independent martingale.

\cite{teutsch-etal2012} asked whether martingales whose wagers take values in $V$ (defined above) were different from integer martingales with respect to $\infty$-\textsc{gains}. Theorem~\ref{thm V gain not to Z gain} answers their question in the affirmative.

\cite{teutsch2013} introduced the success criterion we call $\infty$-\textsc{consumption} as a \emph{qualitative} distinction between rational and integer martingales. He showed\footnote{Modulo a minor mistake that is corrected here.} that $\infty$-\textsc{gains} and $\infty$-\textsc{consumption} are equivalent for rational but not for integer martingales. He asked what the relation is between $\infty$-\textsc{gains} and $\infty$-\textsc{consumption} for $V$-martingales. Proposition~\ref{prop V gain to save} shows that the two are equivalent for $V$-martingales. However, \textsc{oscillation} can serve as a qualitative distinction between rational and $V$-martingales. Propositions~\ref{prop Q to I gain} and \ref{prop I gain to oscillation} and Theorem~\ref{thm V save not to oscillate} show that $\infty$-\textsc{consumption} and \textsc{oscillation} are equivalent for rational but not for $V$-martingales.

\cite{teutsch-etal2012} used Baire category to distinguish between integer and $V$-martingales to rational martingales. Their proof is based on the observation that the former attain local minima, since they stop once they get close to a local infimum, whereas the latter may decrease in very small steps indefinitely. Baire's category seems to be too coarse to distinguish between two sets that exclude a punctured neighborhood of zero, such as $V$ and $\fZ$. \cite{chalcraft12} introduced a different argument when they characterized the relations between finite wager sets with respect to $\infty$-\textsc{gains}. They asked whether their characterization extends to infinite sets. Proposition~\ref{prop Q to I gain} answers this question in the negative.

\section{The casino setting}\label{sec casino}

Before providing the formal definitions, we first consider an illustrative example that demonstrates how to distinguish between classes of predictability (specifically, integer and rational martingales with respect to $\infty$-\textsc{gains}).
   
A sequence of gamblers enter a casino. Gambler 1 declares her betting strategy, a function from finite histories of Heads and Tails to rational-valued bets. Then the rest of the gamblers, 2, 3,\ldots (countably many of them), declare their strategies, which are restricted to integer-valued bets. The casino wants Gambler 1 to win and all the others to lose. That is, the casino should choose a sequence of Heads and Tails so that the limit of Gambler 1's capital is infinite and everyone else's is finite.  

Is it possible? Consider the following strategy for Gambler 1. She enters the casino with two cents in her pocket (any non-dyadic fraction of a dollar will do). After $t$ periods she has $x_t$ dollars in her pocket and she bets $\frac 1 2 \set{x_t}$ on Heads (where $\set{x}:=x-\floor{x}$).

Now, Gamblers 2, 3,\ldots declare their betting strategies. The casino places a finite sequence $\sigma$ of Heads and Tails on which the capital of Gambler 2 is minimal among all such finite sequences. Recall that Gamblers 2, 3,\ldots may bet only integer numbers; hence that minimum exists. At this point Gambler 2 is bankrupt. If he places a (non-zero) bet afterwards, it will contradict the fact that $\sigma$ is a minimizer of his capital.\footnote{This argument extends to $V$-martingales. \citet[Lemma 4]{teutsch-etal2012} essentially asserts that for any $V$-martingale, any string $\sigma$ has an extension $\tau$ such that the martingale is constant on extensions of $\tau$.} Note that Gambler 1 bets only on the fractional part of her capital, and so $\floor{x_t}$ never decreases. 

In the next stage, the casino extends $\sigma$ by appending to it sufficiently many Heads, so that Gambler 1's capital increases by at least 1. The casino repeats the same trick against every gambler in turn in order to bankrupt him while ensuring that Gambler 1 does not lose more than the fractional part of her capital, and then it continues to place Heads until she accumulates a dollar. QED.

\section{Definitions}
The set of all finite bit strings is denoted $\set{-1,+1}^{<\infty}=\bigcup_{n=0}^\infty\set{-1,+1}^n$. The length of a string $\sigma\in\set{-1,+1}^{<\infty}$ is denoted $\norm{\sigma}$. The empty string is denoted $\varepsilon$. The concatenation of two strings $\sigma$ and $\tau$ is denoted $\sigma,\tau$. For an infinite bit sequence $x\in\set{-1,+1}^\fN$ and a non-negative integer $n$, the prefix of $x$ of length $n$ is denoted $x\restriction n$.
 
A \emph{supermartingale} is a function $M:\set{-1,+1}^{<\infty}\to \fQ$, satisfying
\[
M(\sigma)\geq \frac{M(\sigma,-1)+M(\sigma,+1)}2,
\]
for every $\sigma\in\set{-1,+1}^{<\infty}$.
\begin{rem}
Restricting supermartingales to rational values (rather than allowing all real values) is meant to avoid unnecessary technicalities that would arise from considering real-valued computable functions. This restriction does not result in a loss of generality, as the questions addressed in the present paper are such that any real-valued supermartingale could be approximated by a rational-valued one. 
\end{rem}

We call the difference $M(\sigma)- \frac 1 2\left(M(\sigma,-1)+M(\sigma,+1)\right)$ $M$'s \emph{marginal consumption} at $\sigma$. If $M$'s marginal consumption is 0 at every $\sigma\in\set{-1,+1}^{<\infty}$, we say that $M$ is a \emph{(proper) martingale}. 

The next couple of paragraphs contain definitions that are exemplified in Figure~\ref{fig example}. The \emph{wager} of $M$ at $\sigma$ is defined as
\[
M'(\sigma) = \frac{M(\sigma,+1)-M(\sigma,-1)}2.
\]
Note that $M'(\sigma)$ is positive if $M$ bets on ``$+1$'' and negative if $M$ bets on ``$-1$'' at $\sigma$. When $M$ is a proper martingale, our definition of wager coincides with the classic definition
\[
M'(\sigma)=M(\sigma,+1)-M(\sigma).
\]
The \emph{initial capital} of (a supermartingale) $M$ is defined as $M(\varepsilon)$. A proper martingale is determined by its initial capital and its wagers at every $\sigma\in\set{-1,+1}^{<\infty}$. 

For a supermartingale $M$, the \emph{proper cover} of $M$ is the martingale $\tilde M$ whose initial capital and wagers are the same as $M$'s. The \emph{accumulated consumption} of $M$ is defined as $\tilde M -M$. Note that the accumulated consumption of $M$ at $\sigma$ is the sum of $M$'s marginal consumption over all proper prefixes of $\sigma$.
\begin{figure}
\begin{center}
\begin{tabular}{cccc}
$M$ & $\tilde M$ & $M'\, (=\tilde M')$ & $\tilde M- M$\\
\hline
\Tree [.{4}  
         [.5 7 1 ] 
         [.1 0 2 ]
      ] &

\Tree [.{4}  
         [.6 9 3 ] 
         [.2  1 3 ]
      ] &
\Tree [.{2}  
         [.3  {\color{white} 1} {\color{white} 1} ] 
         [.-1 {\color{white} 1} {\color{white} 1} ]
      ] &
\Tree [.{0}  
         [.1 2 2 ] 
         [.1  1 1 ]
      ]\\
$\vdots\quad \vdots\quad \vdots\quad \vdots$ &$\vdots\quad \vdots\quad \vdots\quad \vdots$ & $\vdots\quad \vdots\quad \vdots\quad \vdots$ & $\vdots\quad \vdots\quad \vdots\quad \vdots$
\end{tabular}
\end{center}
\caption{\label{fig example} A supermartingale $M$ with its proper cover $\tilde M$, wager $M'$, and accumulated consumption functions  $\tilde M -M$. The tree nodes represent finite strings with left descendants corresponding to $+1$ extensions.}
\end{figure}

For a supermartingale $M$ and a string $\sigma$, we say that $M$ \emph{goes bankrupt at} $\sigma$, if the sum of $M$'s wager and marginal consumption exceeds $M$'s capital. That is,
\[
M(\sigma)-\norm{M'(\sigma)}< M\text{'s marginal consumption at }\sigma.
\]
We say that $M$ goes bankrupt at $\sigma$ if incurring  a loss in the next round following $\sigma$ will make $M$'s capital negative. Consequently, $M$ never goes bankrupt if and only if it is always non-negative. 

For an infinite sequence $x\in\set{-1,+1}^\fN$, we say that $M$ \emph{goes bankrupt on} $x$, if $M$ goes bankrupt at $x\restriction n$, for some non-negative integer $n$. 
   
Let $M$ be a supermartingale and $x\in\set{-1,+1}^\fN$. If $M$ does not go bankrupt on $x$, we say that $M$ achieves
\begin{itemize} 
\item \emph{$\infty$-\textsc{gains}} on $x$, if $\lim_{n\to\infty} M(x\restriction n)=\infty$;
\item \emph{$\infty$-\textsc{consumption}} on $x$, if $\lim_{n\to\infty}(\tilde M-M)(x\restriction n)=\infty$;
\item \emph{\textsc{oscillation}} on $x$, if $\liminf_{n\to\infty} \tilde M(x\restriction n) \neq \limsup_{n\to\infty} \tilde M(x\restriction n)$.
\end{itemize}
We refer to $\infty$-\textsc{gains}, $\infty$-\textsc{consumption}, and \textsc{oscillation} as \emph{success criteria}.

\begin{rem}
Note that {$\infty$-\textsc{consumption}} is the only success criterion that relies on supermartingales rather than proper martingales. The reason for defining the other criteria on supermartingales is entirely semantic. We want to distinguish between strategies (martingales/supermartingales) and payoffs (success criteria). In what follows, when $\infty$-\textsc{gains} or \textsc{oscillation} are considered, it is often assumed (when no loss of generality occurs) that the supermartingales in question are in fact proper martingales.

Also, the standard definition of (super)martingales asserts non-negative values. We include the requirement that there be non-negative values in the success criteria by imposing that the martingales  do not go bankrupt. It is sometimes convenient to assume (when no loss of generality occurs) that the martingales in question are non-negative. The reason for not asserting that all martingales take non-negative values is to allow for the next definition.
\end{rem}

A (super)martingale is called \emph{history-independent} if $M'(\sigma)=M'(\tau)$, whenever $\norm{\sigma}=\norm{\tau}$. The consumption of $M$, however, may depend on history.

For $A\subset \fQ$, an \emph{$A$}-(super)martingale is a (super)martingale whose wagers take values in $A$. We will be mainly interested in restricting the wagers to the set of integers $\fZ$, and the set $V:=\set{a\in\fQ:\ \norm{a}\geq 1\text{ or }a=0}$.

We define a \emph{predictability class} (\emph{class}, for short) as a pair $\mathcal C=(A,C)$, where $A\subset \fQ$ and $C\in\set{\text{$\infty$-\textsc{gains}, $\infty$-\textsc{consumption}, \textsc{oscillation}}}$.
\begin{definition}
We say that a class $(A_1,C_1)$ \emph{implies} another class $(A_2,C_2)$ if for every $x\in\set{-1,+1}^\fN$ and every $A_1$-supermartingale $M_1$ that achieves $C_1$ on $x$, there exists an $A_2$-supermartingale $M_2$ such that
\begin{enumerate}[(a)]
\item $M_2$ achieves $C_2$ on $x$; and
\item $M_2$ is computable relative to $M_1$ (where $M_1$ is represented by listing its values in lowest common terms along an effective enumeration of $\{-1,+1\}^{<\infty}$).
\end{enumerate}
\end{definition}
Note that implication is a transitive relation.
It turns out that the classes we study exhibit the property that when they do not imply each other, they satisfy a stronger relation than just the negation of implication. 
\begin{definition}
We say that a class $(A_1,C_1)$ \emph{anti-implies} another class $(A_2,C_2)$ if there exists a computable history-independent $A_1$-supermartingale $M_1$, such that for any countable set of $A_2$-supermartingales (not necessarily computable) $\mathcal B$, there exists a sequence $x\in\set{-1,+1}^\fN$ on which
\begin{enumerate}[(a)]
\item $M_1$ achieves $C_1$; and
\item none of the elements of $\mathcal B$ achieves $C_2$.
\end{enumerate} 
\end{definition}
 
Anti-implication behaves similarly to the negation of implication in the following sense.
\begin{lem}\label{lem anti}
Let $\mathcal C_1$, $\mathcal C_2$, and $\mathcal C_3$ be classes. If $\mathcal C_2$ implies $\mathcal C_3$ and $\mathcal C_1$ anti-implies $\mathcal  C_3$, then $\mathcal C_1$ anti-implies $\mathcal C_2$.  
\end{lem}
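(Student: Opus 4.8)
The plan is to prove the contrapositive-style statement directly from the definitions, unwinding what $\mathcal C_1 \centernot\longrightarrow \mathcal C_3$ gives us and what $\mathcal C_2 \longrightarrow \mathcal C_3$ lets us transform. So suppose $\mathcal C_2 \longrightarrow \mathcal C_3$; we must exhibit the witness demanded by $\mathcal C_1 \centernot\longrightarrow \mathcal C_2$. From $\mathcal C_1 \centernot\longrightarrow \mathcal C_3$ we already have a computable history-independent $\mathcal C_1$-supermartingale $M_1$ with the defeating property against every countable family of $\mathcal C_3$-supermartingales. I claim this very same $M_1$ witnesses $\mathcal C_1 \centernot\longrightarrow \mathcal C_2$.

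To verify this, first fix an arbitrary countable set $\mathcal B$ of $\mathcal C_2$-supermartingales; we need a sequence $x$ on which $M_1$ achieves $C_1$ and no member of $\mathcal B$ achieves $C_2$. The key step is to push $\mathcal B$ through the implication $\mathcal C_2 \longrightarrow \mathcal C_3$. For each $N \in \mathcal B$, since $\mathcal C_2 \longrightarrow \mathcal C_3$, for every $x$ on which $N$ achieves $C_2$ there is a $\mathcal C_3$-supermartingale computable relative to $N$ that achieves $C_3$ on $x$. The subtlety is that the implication, as stated, produces $M_2$ depending on $x$, not a single machine; so I would first observe that the definition of implication can be upgraded, with no loss, to yield for each $N$ a \emph{single} $\mathcal C_3$-supermartingale $N^* $ (computable relative to $N$) such that $N^*$ achieves $C_3$ on every $x$ on which $N$ achieves $C_2$ — this is the standard trick of taking a suitable average or sum over a computable enumeration of the $x$-indexed witnesses, or, more carefully, noting that the witness $M_2$ in the definition may be taken uniformly in $x$ because $M_2$ is only required to be computable relative to $M_1 = N$, which already encodes everything. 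Granting this, set $\mathcal B^* = \{\, N^* : N \in \mathcal B \,\}$, which is again a countable set of $\mathcal C_3$-supermartingales.

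Now apply the hypothesis $\mathcal C_1 \centernot\longrightarrow \mathcal C_3$ to the family $\mathcal B^*$: there is a sequence $x \in \set{-1,+1}^\fN$ on which $M_1$ achieves $C_1$ and no element of $\mathcal B^*$ achieves $C_3$. It remains to check that no element $N$ of $\mathcal B$ achieves $C_2$ on this $x$. Suppose some $N \in \mathcal B$ did achieve $C_2$ on $x$; then by construction $N^* \in \mathcal B^*$ achieves $C_3$ on $x$, contradicting the choice of $x$. Hence no $N \in \mathcal B$ achieves $C_2$ on $x$, and since $\mathcal B$ was arbitrary, $M_1$ witnesses $\mathcal C_1 \centernot\longrightarrow \mathcal C_2$, completing the proof.

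The main obstacle is the uniformity point flagged in the second paragraph: the definition of $\longrightarrow$ quantifies $M_2$ after $x$, whereas for the diagonalization against $\mathcal B^*$ to make sense we need one fixed $\mathcal C_3$-supermartingale per element of $\mathcal B$. I would resolve this by remarking (perhaps as a small separate observation, or inline) that because clause (b) only asks for computability relative to $M_1$ and not for uniformity in $x$, one may take the countable set $\mathcal B^*$ to simply consist of \emph{all} $\mathcal C_3$-supermartingales computable relative to some $N \in \mathcal B$ — a countable set, since each $N$ has only countably many objects computable relative to it and $\mathcal B$ is countable — and then the implication guarantees that whenever $N$ achieves $C_2$ on $x$, \emph{some} member of $\mathcal B^*$ achieves $C_3$ on $x$, which is exactly what the argument above needs. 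With $\mathcal B^*$ taken this way the rest is immediate.
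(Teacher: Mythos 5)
Your proof is correct and, in its final form, is exactly the paper's argument: the paper likewise takes $\mathcal B'$ to be the (countable) set of all $\mathcal C_3$-supermartingales computable from $\mathcal B$, applies $\mathcal C_1\centernot\longrightarrow\mathcal C_3$ to $\mathcal B'$, and then uses $\mathcal C_2\longrightarrow\mathcal C_3$ to conclude that no member of $\mathcal B$ succeeds on the resulting $x$. You should delete the intermediate ``upgrade to a single uniform witness $N^*$ by averaging'' detour --- averaging or summing witnesses need not preserve restricted wager sets such as $V$ or $\fZ$, and as your last paragraph correctly observes, no such uniformization is needed once $\mathcal B^*$ is taken to be the full countable closure.
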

\begin{proof}
Let $\mathcal C_i=(A_i,C_i)$, $i=1,2,3$, be the three classes of Lemma~\ref{lem anti}. Take a supermartingale $M_1$ that separates $\mathcal C_1$ from $\mathcal C_3$. Let $\mathcal B$ be a countable set of $A_2$-supermartingales. Let $\mathcal B'$ be the set of all $A_3$-supermartingales computable from some element of $\mathcal B$. Since $\mathcal B'$ is countable, there exists a sequence $x\in\set{-1,+1}^\fN$ on which $M_1$ achieves $C_1$, but no element of $\mathcal B'$ achieves $C_3$. Since $\mathcal C_2$ implies $\mathcal C_3$, no element of $\mathcal B$ achieves $C_2$ on $x$.
\end{proof}
\section{Implication results}
\begin{figure}
\begin{center}
\begin{tabular}{c |c c c c c}
Wagers& \multicolumn{5}{c}{Success Criterion}\\
& $\infty$-\textsc{gains} & & $\infty$-\textsc{consumption} & & \textsc{oscillation} \\
\hline
\\ 
$\fQ$ & $\bullet$    & $\xleftrightarrow{}$ & $\bullet$     & $\xleftrightarrow{}$ & $\bullet$\\
& $\uparrow$&&&&\\
$V$   & $\bullet$    & $\xleftrightarrow{}$   & $\bullet$     &  & $\bullet$\\
& $\uparrow$&&&&$\updownarrow$\\
$\fZ$ & $\bullet$    & $\xleftarrow{}$ & $\bullet$     & $\xleftarrow{}$ & $\bullet$\\
\\
\hline
\end{tabular}	
\end{center}
\caption{\label{fig summary}Relations between classes. Arrows indicate implication.}
\end{figure}

This section contains propositions that explain the arrows in Figure~\ref{fig summary} and also their transitive closure, by transitivity. For any success criterion $C$, if $A\subset B\subset \fQ$, then $(A,C)$ implies $(B,C)$. This explains the upwards arrows of Figure~\ref{fig summary}, since $\fZ\subset V\subset\fQ$. The leftwards arrows of Figure~\ref{fig summary} follow from the fact that $\infty\text{-\textsc{consumption}}$ implies $\infty\text{-\textsc{gains}}$ and the following proposition.
\begin{prop}\label{prop oscillate to save}
For every $A\subset\fQ$ that includes $0$, $(A,\text{\textsc{oscillation}})$ implies $(A,\infty\text{-\textsc{consumption}})$.
\end{prop}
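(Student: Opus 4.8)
The plan is to uncover an up-crossing structure hidden in the hypothesis: if $\tilde M_1$ achieves \textsc{oscillation} on $x$, then $\tilde M_1(x\restriction n)$ crosses some fixed rational interval $[a,b]$ from below $a$ to above $b$ infinitely often, and each such crossing can be harvested as a fixed positive amount of consumption. First I would record two easy consequences of $M_1$ not going bankrupt on $x$: since the accumulated consumption $\tilde M_1-M_1$ is nonnegative and $M_1(x\restriction n)-\norm{M_1'(x\restriction n)}\ge 0$, we get $\norm{M_1'(x\restriction n)}\le \tilde M_1(x\restriction n)$ for all $n$; in particular $L:=\liminf_n \tilde M_1(x\restriction n)$ is finite (and $\ge 0$), while \textsc{oscillation} gives $U:=\limsup_n \tilde M_1(x\restriction n)>L$. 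I then fix rationals $0<a<b$ with $L<a$ and $b<U$ (the condition ``$b<U$'' being vacuous when $U=\infty$). These two rationals are the only piece of non-uniform advice the construction uses, so the $M_2$ produced is still computable relative to $M_1$ alone, which is all the definition of implication requires.

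Next I would define $M_2$ to run in rounds, its state at a node $\sigma$ being read off from the trace $\tilde M_1(\sigma\restriction 0),\dots,\tilde M_1(\sigma)$, which is computable from $M_1$. Set $M_2(\varepsilon)=a$. In a \emph{waiting} phase $M_2$ wagers $0$ and consumes $0$ (so its capital stays $a$) until the first prefix $\tau$ of $\sigma$ with $\tilde M_1(\tau)\le a$; from $\tau$ on, an \emph{active} phase begins in which $M_2$ copies $M_1$'s wager, $M_2'(\cdot)=M_1'(\cdot)$, and consumes $0$, until the first node $\rho\succeq\tau$ along $\sigma$ with $\tilde M_1(\rho)\ge b$; at $\rho$, $M_2$ consumes exactly its surplus over $a$ (bringing its capital back to $a$) and a new round starts. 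Every increment of $M_2$ is either $0$, which lies in $A$ by hypothesis, or some $M_1'(\cdot)\in A$, and every marginal consumption is nonnegative, so $M_2$ is a genuine $A$-supermartingale. The invariant to maintain is that each active phase starts with capital exactly $a$, so in an active phase begun at $\tau$ with $w:=\tilde M_1(\tau)\le a$ one has $M_2(\rho)=a+\tilde M_1(\rho)-w$ for every node $\rho$ in that phase; at the end of the phase this is $\ge b$, so the consumption there is $\ge b-a>0$.

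It remains to verify the two requirements for $M_2$ to achieve $\infty$-\textsc{consumption} on $x$. For \textbf{non-bankruptcy on $x$} I would check the possible states at $x\restriction n$: in a steady waiting node the capital is $a$ and the wager is $0$; when a waiting node launches an active phase one needs $\norm{M_1'(x\restriction n)}\le a$, which holds because $\norm{M_1'(x\restriction n)}\le \tilde M_1(x\restriction n)\le a$; at an interior active node the capital is $a+\tilde M_1(x\restriction n)-w\ge \norm{M_1'(x\restriction n)}$ since $\tilde M_1(x\restriction n)\ge \norm{M_1'(x\restriction n)}$ and $w\le a$; and at a round-ending node the capital is $\ge b$, the wager is $0$, and the consumption (capital minus $a$) does not exceed the capital because $a\ge 0$. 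For \textbf{divergence of the consumption}, $\liminf_n \tilde M_1(x\restriction n)<a$ forces $\tilde M_1(x\restriction n)\le a$ infinitely often and $\limsup_n \tilde M_1(x\restriction n)>b$ forces $\tilde M_1(x\restriction n)\ge b$ infinitely often, so along $x$ the construction completes infinitely many rounds, each adding at least $b-a$ to $(\tilde M_2-M_2)(x\restriction n)$; hence this accumulated consumption tends to $\infty$.

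The main obstacle I anticipate is conceptual rather than computational: one must resist the natural temptation to let $M_2$ imitate $M_1$ throughout and simply bank the winnings, since then the accumulated consumption would be trapped below $\liminf_n \tilde M_1(x\restriction n)$ (which may be small, even $0$). The fix is exactly the ``reset'' mechanism above — $M_2$ keeps its own capital bounded, repeatedly returning to the fixed small level $a$ and re-riding the oscillation up to level $b$, so that it is the \emph{number of crossings}, not the height of any single excursion, that drives the consumption to infinity. Once this is in place the remaining work is the routine bookkeeping indicated above, the one point to watch being the inequality $\norm{M_1'(x\restriction n)}\le \tilde M_1(x\restriction n)$ on $x$, which is what makes copying $M_1$'s wagers during an active phase safe.
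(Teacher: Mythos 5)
Your proposal is correct and follows essentially the same route as the paper: fix reals $a<b$ strictly between the $\liminf$ and $\limsup$ of $\tilde M_1$ along $x$, copy $M_1$'s wagers only during the up-crossings from below $a$ to above $b$, and consume at least $b-a$ at the end of each crossing. The paper's version differs only in bookkeeping (initial capital $2a$ and a flat consumption of $b-a$ per crossing, versus your reset-to-$a$ rule), and your explicit verification of non-bankruptcy via $\norm{M_1'(x\restriction n)}\le \tilde M_1(x\restriction n)$ is a detail the paper leaves implicit.
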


\begin{proof}
Let $0\in A\subset\fQ$, $x\in\set{-1,+1}^\fN$, and let $M$ be an $A$-supermartingale that achieves \textsc{oscillation} on $x$. We assume w.l.o.g. that $M$ is a proper martingale, because by definition $M$ achieves \textsc{oscillation} iff $\tilde M$ oscillates. Take $a,b\in\fQ$, such that
\[
\liminf_{n\to\infty} M(x\restriction n)<a<b<\limsup_{n\to\infty} M(x\restriction n).
\]

We construct a supermartingale $S$ that achieves $\infty\text{-\textsc{consumption}}$ on $x$. In the beginning $S$ waits until $M$'s capital drops below $a$. Then $S$ mimics $M$ until $M$'s capital is above $b$. At this point $S$ consumes $b-a$ and starts over waiting until $M$'s capital drops below $a$, mimicking $M$ until $M$'s capital goes above $b$, consuming $b-a$, and starting over again. 
 
Formally, for $y\in\set{-1,+1}^\fN$, define stopping times $n_0(y),n_1(y),\ldots$ recursively by 
\begin{align*}
n_0(y)&=\inf\set{n\geq 0:\ M(y\restriction n)< a},\\
n_{2i+1}(y) &=\inf\set{n>n_{2i}:\ M(y\restriction n)> b},\\
n_{2(i+1)}(y) &=\inf\set{n>n_{2i+1}:\ M(y\restriction n)<a},
\end{align*}
with the convention that the infimum of the empty set is $\infty$.

Define an $A$-supermartingale $S=S_{a,b}$ by specifying $S'$, $S(\varepsilon)$, and $f=\tilde S-S$  as follows: $S(\varepsilon)=2a$; before time $n_0(y)$, $S'\equiv 0$ and $f\equiv 0$. For $n_{2i}(y) \leq t < n_{2i+1}(y)$, set $S'(y\restriction t)=M'(y\restriction t)$; otherwise $S'=0$. At times $\set{n_{2i+1}(y)}_{i=0}^\infty$, $f$ increases by $b-a$; otherwise $f$ doesn't change.

Since, on $x$, $M$ crosses the interval $(a,b)$ infinitely many times, all $n_i(x)$ are finite; therefore $f$ increases infinitely many times, and so $S$'s consumption on $x$ is infinite.

Note that it is not assumed that  $a$ and $b$ are computable relative to $M$. Formally, a parameterized family of supermartingales $\{S_{a,b}:a,b\in \fQ\}$ is considered. Each member of this family is computable relative to $M$, and at least one of them achieves $\infty\text{-\textsc{consumption}}$ whenever $M$ oscillates.
\end{proof}

The next proposition explains why $V$ and $\fZ$ \textsc{oscillation} are the same.
\begin{prop}\label{prop V to Z oscillate}
$(V,\text{\textsc{oscillation}})$ implies $(\set{0,-1,+1},\text{\textsc{oscillation}})$.
\end{prop}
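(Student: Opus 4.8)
The plan is to take an arbitrary $V$-supermartingale $M$ that achieves \textsc{oscillation} on some $x$, and to simulate it with a $\set{0,-1,+1}$-supermartingale $N$ that oscillates on exactly the same sequences. Since \textsc{oscillation} is a property of the proper cover $\tilde M$, I may as well assume $M$ is a proper martingale (this is the standing convention noted in the remark). The idea is that a single bet of size $a$ with $\norm a\ge 1$ can be replaced by $\ceil{\norm a}$ consecutive unit bets in the \emph{same direction}, all reading the same bit of $x$; the string $x$ is therefore re-encoded by a computable monotone ``stretching'' that repeats each bit the appropriate number of times. The partial sums of $N$ along the stretched sequence visit, among other values, exactly the values $\tilde M(x\restriction n)$, so $\liminf$ and $\limsup$ are preserved.

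The key steps, in order, are as follows. First I would set up the bookkeeping: define, for each finite history, the ``block length'' $\ell(\sigma)=\ceil{\norm{M'(\sigma)}}$, and the induced stretching map $s$ from bit strings to bit strings that replaces the bit after history $\sigma$ by $\ell(\sigma)$ copies of that bit, with the convention $\ell(\sigma)\ge 1$ even when $M'(\sigma)=0$ (a zero bet becomes one zero bet, so the domains stay in bijection). Second, I would define $N$ on the image of $s$ by declaring its wager at each intermediate node of a block to be $\mathrm{sgn}(M'(\sigma))\in\set{0,-1,+1}$, with initial capital $M(\varepsilon)$; within a block of $\ell$ unit steps all in the same direction, $N$'s capital moves from $M(\sigma)$ to $M(\sigma)\pm\ell$, and I would need $M(\sigma)\pm\norm{M'(\sigma)}$ to be one of the values attained, which it is not exactly unless $\norm{M'(\sigma)}$ is an integer. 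So third — and this is where the real content is — I would absorb the discrepancy $\ell(\sigma)-\norm{M'(\sigma)}\in[0,1)$ into \emph{consumption}: make $N$ a genuine supermartingale that, at the first node of each block, has marginal consumption exactly $\ell(\sigma)-\norm{M'(\sigma)}$, so that $N$'s proper cover $\tilde N$ along the block ends precisely at $\tilde M(\sigma,\pm1)=M(\sigma,\pm1)$. Then $\tilde N$ restricted to the ``block-boundary'' times reproduces the sequence $\bigl(\tilde M(x\restriction n)\bigr)_n$, and between boundaries $\tilde N$ moves monotonically by at most $\ell(\sigma)$, so it cannot create new limit points beyond a bounded neighbourhood — one checks $\liminf\tilde N = \liminf\tilde M$ and $\limsup\tilde N=\limsup\tilde M$ along $s(x)$, hence $N$ oscillates iff $M$ does. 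Fourth, I would extend $N$ off the image of $s$ arbitrarily (e.g. zero wagers, no consumption) so that it is a totally defined $\set{0,-1,+1}$-supermartingale, and note the whole construction of $N$ and of $s$ is computable from $M$, which gives clause (b) of implication, with $x\mapsto s(x)$ being the sequence witnessing the definition.

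The main obstacle I anticipate is the oscillation-preservation bookkeeping in the presence of consumption: one must be careful that the added marginal consumption does not itself destroy boundedness or create spurious oscillation of $\tilde N$, and that the chosen sign convention at intermediate nodes keeps $N$ from going bankrupt whenever $M$ does not. The bankruptcy check is slightly delicate because $N$'s capital dips by up to $\ell(\sigma)$ during a block while $M$'s only changes by $\norm{M'(\sigma)}$; I would handle this by observing that if $M$ does not go bankrupt at $\sigma$ then $M(\sigma)\ge\norm{M'(\sigma)}\ge \ell(\sigma)-1$, and arrange the consumption to be taken \emph{after} the unit moves within the block (or spread so that $N$ stays $\ge 0$ throughout), using that unit bets in a fixed direction applied to a capital $\ge \ell(\sigma)$ stay nonnegative. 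A secondary, purely cosmetic obstacle is that to literally land on the value $M(\sigma,+1)$ one may prefer $\ell(\sigma)=\ceil{\norm{M'(\sigma)}}$ for the \emph{losing} branch but something slightly different for the \emph{winning} branch; since within one block both branches share the same first $\ell(\sigma)-1$ same-direction steps and only the last step differs in outcome, a single $\ell(\sigma)$ works for both once the consumption is tuned per outcome — this is the one place where using a supermartingale rather than a martingale for $N$ is essential, which is exactly why the target class in Proposition~\ref{prop V to Z oscillate} is phrased with \textsc{oscillation} (a cover-based criterion) rather than $\infty$-\textsc{gains}.
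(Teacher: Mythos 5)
There is a genuine problem here, and it is not in the bookkeeping you worry about at the end: your construction proves the wrong statement. The definition of implication $(A_1,C_1)\longrightarrow(A_2,C_2)$ requires that for every $x$ and every $A_1$-supermartingale achieving $C_1$ \emph{on $x$}, some $A_2$-supermartingale achieves $C_2$ \emph{on that same $x$}. Your $N$ oscillates on the stretched sequence $s(x)$, not on $x$; the closing clause ``with $x\mapsto s(x)$ being the sequence witnessing the definition'' is precisely where the argument parts company with what has to be shown. These classes define randomness notions on sequences, so succeeding on a recoding of $x$ says nothing about whether $x$ itself is exploitable by unit-wager martingales, and the simulation cannot be pulled back: each bit of $x$ is revealed only once, so a $\set{0,-1,+1}$-martingale betting against $x$ has no way to place $\ceil{\norm{M'(\sigma)}}$ successive unit bets on the same outcome.

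The paper's proof stays on $x$ and uses the $V$-restriction in a quite different way. Let $L=\liminf_n M(x\restriction n)$ and choose $t_0$ so that $M(x\restriction t)>L-\frac12$ for all $t\ge t_0$, while $\norm{M(x\restriction t)-L}<\frac12$ infinitely often. At any $t\ge t_0$ with $\norm{M(x\restriction t)-L}<\frac12$ and $M'(x\restriction t)\neq 0$, the wager has modulus at least $1$, so a loss would drive the capital below $L-\frac12$; hence $M$ is \emph{forced to win} at such times, and since $M$ oscillates (so the capital must repeatedly leave a neighbourhood of $L$), such times occur infinitely often. A $\set{0,-1,+1}$-martingale that at these times alternately copies and contradicts $\mathrm{sign}(M'(x\restriction t))$ therefore bounces between capitals $1$ and $2$ forever, which is \textsc{oscillation} on $x$ itself. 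If you want to salvage your write-up, replace the recoding idea with a ``forced win near the liminf'' argument of this kind.
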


\begin{proof}
Let $x\in\set{-1,+1}^\fN$, and let $M$ be a $V$-martingale that oscillates on $x$. Let $L=\liminf_{n\to\infty}M(x\restriction n)$. There exists $t_0\in\fN$ such that $M(x\restriction t)> L-\frac 1 2$, for every $t\geq t_0$.

We construct a parameterized family of $\set{0,-1,+1}$-martingales, $\set{S_{e,l}}_{e\in\fN,l\in\fQ}$, such that each $S_{e,l}$ is computable from $M$; and $S_{e,l}$ oscillates between 1 and 2 on $x$, whenever $e\geq t_0$ and $|l-L|<\frac 1 4$. Note that we do not assume that $t_0$ is computable from $M$ or $L$ (not even relative to $x$).

\begin{align*}
S_{e,l}(\varepsilon)&=1,\\
S_{e,l}'(y\restriction t)&=
\begin{cases} 
\sign(M'(y\restriction t)) &\text{if $t\geq e$, $\norm{M(x\restriction t)-l}< \tfrac 1 4$, and $S_{e,l}(y\restriction t)=1$,}\\
-\sign(M'(y\restriction t)) &\text{if $t\geq e$, $\norm{M(x\restriction t)-l}< \tfrac 1 4$, and $S_{e,l}(y\restriction t)=2$,}\\
0&\text{otherwise,}
\end{cases}
\end{align*}
where
\begin{equation*}
\sign(z):=\begin{cases}
+1 & \text{if $z>0$,}\\
-1 & \text{if $z<0$,}\\
0 & \text{if $z=0$.}\\
\end{cases}
\end{equation*}

Fix $e\geq t_0$ and $l\in(L- \frac 1 4,L + \frac 1 4)$. Any neighborhood of $L$ is visited by $M(x\restriction t)$ infinitely often. For $t\geq t_0$, if $|M(x\restriction t)-L|<\frac 1 2$ then $\sign(M'(x\restriction t))$ is either $0$ or $x_{t+1}$. Hence, $M(x\restriction t)$ visits $(l- \frac 1 4,l + \frac 1 4)$ infinitely often and each time it visits, $\sign(M'(x\restriction t))$ is either 0 or $x_{t+1}$. Since $M(x\restriction t)$ does not converge, $\sign(M'(x\restriction t))=x_{t+1}$ infinitely often. By the definition of $S_{e,l}$, we get that $S_{e,l}(x\restriction t)$ changes from 1 to 2 and back to 1 infinitely often.
\end{proof}

The rightwards arrows in the top row of Figure~\ref{fig summary} are explained by showing that $(\fQ,\infty\text{-\textsc{gains}})$ implies $(\fQ,\text{\textsc{oscillation}})$. This is proved in two steps: Propositions~\ref{prop Q to I gain} and \ref{prop I gain to oscillation}. Proposition~\ref{prop Q to I gain} answers a question from \citet[p. 164]{chalcraft12} in the negative.  

\begin{prop}\label{prop Q to I gain}
$(\fQ,\infty\text{-\textsc{gains}})$ implies $([-1,1]\cap\fQ,\infty\text{-\textsc{gains}})$.
\end{prop}

\begin{proof}

Let $M$ be a $\fQ$-supermartingale. As usual, we assume w.l.o.g. that $M$ is a non-negative proper martingale. We further assume that $M(\sigma)\geq 1$, for all $\sigma\in\set{-1,+1}^{<\infty}$; otherwise, consider the martingale $M+1$.

We define a $[-1,1]\cap\fQ$-martingale $S$ that makes $\infty\text{-\textsc{gains}}$ whenever $M$ does. We define $S$ by specifying its initial capital and its wagers at any $\sigma\in\set{-1,+1}^{<\infty}$ as follows:
\begin{align*}
S(\varepsilon) &=M(\varepsilon),\\
S'(\sigma) &= \frac{M'(\sigma)}{M(\sigma)}. 
\end{align*}

For any $x\in\set{-1,+1}^{\fN}$ and $t\in\fN$, we have
\begin{equation*}
\label{eq M} M(x\restriction t) = S(\varepsilon)\prod_{l=0}^{t-1}(1+x_{l+1}S'(x\restriction l)).
\end{equation*}

Since $log(1+z)\leq z$, for every $z \geq -1$, we have
\begin{multline*}
1\leq M(x\restriction t) = e^{\log(S(\varepsilon))+\sum_{l=0}^{t-1}\log(1+x_{l+1}S'(x\restriction l))}\\
\leq e^{S(\varepsilon)-1+\sum_{l=0}^{t-1}x_{l+1}S'(x\restriction l)}= e^{S(x\restriction t) -1}.
\end{multline*}

It follows that $S(x\restriction t)\geq 1$; therefore $S$ never goes bankrupt. Also, if $ M(x\restriction t)\to\infty$, as $t\to\infty$, so does $S(x\restriction t)$.
\end{proof}

\begin{prop}\label{prop I gain to oscillation}
$([-1,1]\cap\fQ,\infty\text{-\textsc{gains}})$ implies $([-1,1]\cap\fQ,\text{\textsc{oscillation}})$.
\end{prop}

\begin{proof}
Let $M$ be a $[-1,1]\cap\fQ$-martingale. As usual, $M$ is assumed to be a non-negative proper martingale. We define a $[-1,1]\cap\fQ$-martingale $S$ that oscillates whenever $M$ makes $\infty\text{-\textsc{gains}}$. 

The initial capital of $S$ is an arbitrary number $s_0>0$. Suppose $M$'s initial capital is $m_0$. In the first phase $S$ tries to gain money until its capital becomes at least 2. Let $\alpha_0=\max\set{1,\frac{m_0}{s_0}}$. During the first phase $S'(\cdot)$ is defined as $\frac{M'(\cdot)}{\alpha_0}$, so that either $\frac M S$ remains constant (if $\frac{m_0}{s_0}\geq 1$) or $S-M$ remains constant (if $\frac{m_0}{s_0} < 1$). In either case we have: $S$'s wager is bounded by 1; $S$ doesn't go bankrupt; and if $M$'s capital grows indefinitely, the first phase is bound to terminate. 

In the second phase $S$ tries to lose money. To this end, $S'(\cdot)$ is defined as $-M'(\cdot)$. The second phase terminates when $S$'s capital first drops down to some $s\leq 1$. Since $M$'s wagers are bounded by 1, $s>0$. The second phase is guaranteed to terminate as soon as $M$'s capital grows sufficiently.

The two phases are repeated starting with the capital in $(0,1]$, going above 2, and returning to $(0,1]$ again. As long as $M$'s capital grows indefinitely, the two phases are bound to terminate and, therefore to be repeated infinitely many times; thus $S$ oscillates.
\end{proof}

The remaining rightwards arrow in the middle row of Figure~\ref{fig summary} is explained. 
\begin{prop}\label{prop V gain to save}
$(V,\infty\text{-\textsc{gains}})$ implies $(V,\infty\text{-\textsc{consumption}})$.
\end{prop}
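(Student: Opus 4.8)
The plan is to let $M_2$ be a rescaled copy of $M_1$ whose scaling factor starts at $2$ and drifts down toward (without ever reaching) $1$; each downward adjustment of the factor is realised as consumption, and the adjustments are scheduled at the moments where $M_1$'s capital has doubled again, so that — the capital growing geometrically along these moments while the decrements of the factor decay geometrically — every adjustment releases at least a fixed positive amount. First I would normalise. By the Remark following the definition of the success criteria, assume $M_1$ is a proper $V$-martingale; since adding a positive constant to a martingale changes neither its wagers nor whether it achieves $\infty$-\textsc{gains} or goes bankrupt, assume also $M_1(\varepsilon)\geq 1$. Fix $x$ with $M_1(x\restriction n)\to\infty$ on which $M_1$ is not bankrupt, so $M_1(x\restriction n)-\norm{M_1'(x\restriction n)}\geq 0$ for all $n$.

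Next I would introduce \emph{checkpoints}. For a string $\sigma$ let $\varepsilon=\rho_0\prec\rho_1\prec\cdots$ be chosen greedily, $\rho_{j+1}$ being the shortest $\rho$ with $\rho_j\prec\rho\preceq\sigma$ and $M_1(\rho)\geq 2M_1(\rho_j)$ when such $\rho$ exists, and let $r(\sigma)$ be the number of the $\rho_j$ with $\rho_j\preceq\sigma$, minus $1$. An easy induction gives $M_1(\rho_j)\geq 2^jM_1(\varepsilon)\geq 2^j$, and $M_1(x\restriction n)\to\infty$ forces $r(x\restriction n)\to\infty$. Put $\alpha_j:=1+2^{-j}$, so that $2=\alpha_0>\alpha_1>\cdots>1$ and $\alpha_j-\alpha_{j+1}=2^{-(j+1)}$, and define $M_2$ by $M_2(\varepsilon):=\alpha_0M_1(\varepsilon)$ and $M_2(\sigma,b):=\alpha_{r(\sigma)}M_1(\sigma,b)$ for every $\sigma$ and $b\in\set{-1,+1}$. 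Equivalently, writing $\sigma^-$ for the predecessor of $\sigma$ and setting $r(\varepsilon^-):=0$, at each $\sigma$ the martingale $M_2$ wagers $M_2'(\sigma)=\alpha_{r(\sigma)}M_1'(\sigma)$ and its marginal consumption is $(\alpha_{r(\sigma^-)}-\alpha_{r(\sigma)})M_1(\sigma)$, which is $0$ unless $\sigma$ is a checkpoint (that is, $r(\sigma)=r(\sigma^-)+1$), in which case it equals $2^{-r(\sigma)}M_1(\sigma)$. Plainly $M_2$ is computable relative to $M_1$.

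The verification then breaks into four short parts. (i) \emph{Wagers lie in $V$}: $\norm{M_2'(\sigma)}=\alpha_{r(\sigma)}\norm{M_1'(\sigma)}$ equals $0$ when $M_1'(\sigma)=0$ and is at least $\alpha_{r(\sigma)}>1$ otherwise. (ii) \emph{$M_2$ is a supermartingale}: the marginal consumption $(\alpha_{r(\sigma^-)}-\alpha_{r(\sigma)})M_1(\sigma)$ is a product of two nonnegative numbers, the second being strictly positive only at checkpoints, where $M_1(\sigma)\geq 2^{r(\sigma)}>0$. (iii) \emph{$M_2$ does not go bankrupt on $x$}: for $\sigma=x\restriction n$,
\[
M_2(\sigma)-\norm{M_2'(\sigma)}-(\alpha_{r(\sigma^-)}-\alpha_{r(\sigma)})M_1(\sigma)=\alpha_{r(\sigma)}\bigl(M_1(\sigma)-\norm{M_1'(\sigma)}\bigr)\geq 0,
\]
since $M_1$ is not bankrupt at $\sigma$. (iv) \emph{$M_2$ achieves $\infty$-\textsc{consumption} on $x$}: the marginal consumption at the $j$-th checkpoint of $x$ ($j\geq 1$) is $2^{-j}M_1(\rho_j)\geq 2^{-j}\cdot 2^jM_1(\varepsilon)\geq 1$, and $x$ carries infinitely many checkpoints, so $(\tilde M_2-M_2)(x\restriction n)\to\infty$.

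The real difficulty is finding the construction, not checking it. The classical proof of $(\fR,\infty\text{-\textsc{gains}})\longrightarrow(\fR,\infty\text{-\textsc{consumption}})$ just replays $M_1$ at a fixed fraction of its running capital and banks the remainder; here that fails, because $M_1$ may stake almost its entire capital at every step (for instance, doubling it each time), so a verbatim copy leaves nothing safe to skim, while damping $M_1$ by a small constant factor is forbidden for wagers in $V$. Rescaling $M_1$ \emph{up}, by a factor that always exceeds $1$ but tends to $1$, with the decrements released exactly at the doubling checkpoints so that each release is at least $M_1(\varepsilon)$, is what reconciles ``room to consume'' with ``wagers of size at least $1$''; I expect the only place a slip could occur is in lining up the bookkeeping so that parts (ii) and (iii) telescope as above.
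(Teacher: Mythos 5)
Your proposal is correct and follows essentially the same route as the paper: replay $M_1$ scaled by a factor that starts at $2$ and decreases toward $1$ at the doubling checkpoints, banking each decrement (which, since the capital at the $j$-th checkpoint is at least $2^j$ times the initial capital, is worth at least a fixed positive amount) as consumption. The only difference is cosmetic bookkeeping — you use the factor $1+2^{-j}$ while the paper uses $2-\sum_{j<i}1/M(y\restriction n_j)$ so that each checkpoint releases exactly one unit — and your verification of the supermartingale, $V$-wager, and non-bankruptcy conditions is the same telescoping computation.
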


\begin{proof}
Let $M$ be a $V$-supermartingale. Assume without loss of generality that $M$ is a proper martingale and $M(\varepsilon)\geq 2$. 

We define a supermartingale $S$ that achieves $\infty\text{-\textsc{consumption}}$ whenever $M$ achieves $\infty\text{-\textsc{gains}}$. The initial capital of $S$ is twice the initial capital of $M$. $S$ bets proportionally to $M$, and consumes 1 every time $M$ doubles its capital.

That is, $S(\varepsilon)=2M(\varepsilon)$, $S'(\sigma)=M'(\sigma)\frac{S(\sigma)}{M(\sigma)}$, for every $\sigma\in\set{-1,+1}^{<\infty}$. This ensures that the ratio between the capital of $S$ and $M$ stays constant as long as $S$ does not consume. Every time $M$ doubles its capital $S$ consumes an amount of 1 and as a result the ratio between $S$'s and $M$'s capital decreases by $\frac{1}{M(\sigma)}$. As long as the ratio is at least 1, $S'(\sigma)\in V$, as required.  

It remains to show that $\frac{S(\sigma)}{M(\sigma)}\geq 1$, for every $\sigma\in\set{-1,+1}^{<\infty}$. For $\sigma\in\set{-1,+1}^{<\infty}$, suppose $M$ doubles its capital $k$ times along $\sigma$ at the prefixes $\sigma_1,\ldots,\sigma_k$. That is, $\sigma_i$ is the shortest prefix of $\sigma$ such that $M(\sigma_i)\geq 2M(\sigma_{i-1})$, for all $i=1,\ldots k$, where $\sigma_0=\varepsilon$. By induction on $k$, $\frac{S(\sigma)}{M(\sigma)}=2-\sum_{i=1}^k\frac 1 {M(\sigma_{i-1})}\geq 2 - \sum_{i=1}^k\frac 1 {2^i} \geq 1$.
\end{proof}

\section{Anti-implication results}
Even more interesting than the implication results are the anti-implication results. By virtue of Lemma~\ref{lem anti}, we need only to separate adjacent strongly connected components of the diagram in Figure~\ref{fig summary}, and consider one representative from each strongly connected component.

The next theorem separates between integer $\infty$-\textsc{gains} and $\infty$-\textsc{consumption}. 
\begin{thm}[\citealt{teutsch2013}]\label{thm Z gain not to save}
$(\set{1},\infty\text{-\textsc{gains}})$ anti-implies $(\fZ,\infty\text{-\textsc{consumption}})$.
\end{thm}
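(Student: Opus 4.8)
The plan is to take $M_1$ to be the simplest history-independent $\{1\}$-supermartingale, namely the proper martingale with $M_1(\varepsilon)=1$ and $M_1'(\sigma)=1$ for every $\sigma$ (it wagers $1$ on ``$+1$'' at every position). It is computable and history-independent, and it is a $\{1\}$-supermartingale. Writing $S_n(x)=\sum_{i\le n}x_i$ for the prefix sum, we have $M_1(x\restriction n)=1+S_n(x)$, so $M_1$ achieves $\infty$-\textsc{gains} on $x$ precisely when $S_n(x)\ge 0$ for all $n$ and $S_n(x)\to\infty$. Thus, given a countable family $\mathcal B=\{N_1,N_2,\dots\}$ of $\fZ$-supermartingales, the task is to construct a single $x$ with non-negative prefix sums tending to $\infty$ on which no $N_k$ achieves $\infty$-\textsc{consumption}; note that anti-implication only demands the \emph{existence} of such an $x$, so no effectivity is needed here.

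The engine of the argument is an integrality observation, which I would prove first. Combine it with the reduction ``$N$ achieves $\infty$-\textsc{consumption} on $x$ iff $N$ is not bankrupt on $x$ and its accumulated consumption along $x$ tends to $\infty$''. Suppose $N$ is a $\fZ$-supermartingale and $N(x\restriction n)<1$ at some prefix of $x$. If $N$ is already bankrupt at $x\restriction n$ we are done; otherwise the wager $N'(x\restriction n)$ is an integer of absolute value at most $N(x\restriction n)<1$, hence $0$, so both successors of $x\restriction n$ equal $N(x\restriction n)$ minus the marginal consumption there. Iterating, $N$ stays below $1$, wagers $0$, and (if never bankrupt) is non-increasing at every prefix of $x$ beyond $x\restriction n$, so its total consumption from $x\restriction n$ on is at most $N(x\restriction n)<1$. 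Hence: \emph{once $N$ has been driven below capital $1$ (or bankrupted) on $x$, it is permanently harmless} -- it can never recover and consumes only a bounded amount thereafter. One also records the complementary fact: if $N$ does achieve $\infty$-\textsc{consumption} on $x$ then it makes nonzero wagers infinitely often along $x$ (a supermartingale that wagers $0$ from some point on has bounded accumulated consumption), so there are always ``handles'' to grab.

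The construction then attends to $N_1,N_2,\dots$ by dovetailing, giving each $N_k$ infinitely many turns (with turn-lengths growing). On a turn devoted to $N_k$ we \emph{counter} it: at each step we play the successor on which $N_k$ does not profit -- ``$-1$'' when $N_k$ wagers on ``$+1$'', and ``$+1$'' when it wagers on ``$-1$'' or $0$. At every step where $N_k$ makes a nonzero wager its capital falls by at least $1$ (wagers are integers), while at the remaining steps we are playing ``$+1$'', which only helps $M_1$; so if $N_k$ keeps wagering nonzero it is pushed below $1$ or into bankruptcy, and by the engine above it may then be permanently removed from the dovetailing. An $N_k$ that does not threaten $\infty$-\textsc{consumption} may simply never be removed (e.g. if it eventually wagers $0$ forever, in which case its consumption is bounded anyway); what must be checked is that a genuinely threatening $N_k$ -- one wagering nonzero infinitely often -- is actually caught across its turns, which is where growing turn-lengths are used. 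Between turns we insert runs of ``$+1$'' to keep the prefix sums non-negative and force them to diverge.

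The step I expect to be the real obstacle is keeping $M_1$ solvent during all this countering. Each ``$-1$'' costs $M_1$ a unit and is spent exactly where $N_k$ wagers on ``$+1$''; although each such move also drops $N_k$'s capital, $N_k$ may enter its turns with capital far larger than the fixed constant $M_1(\varepsilon)$, because $\mathcal B$ is chosen after $M_1$ and because $N_k$ may have been fattened while earlier $N_j$'s were handled -- and one cannot simply stockpile prefix-sum in advance, since the ``$+1$'' moves that would do so feed precisely an $N_k$ wagering on ``$+1$''. The resolution must exploit the trade-off exposed by the integrality observation: an $N_k$ staking a large part of its capital on ``$+1$'' collapses under a single ``$-1$'' (it falls by the size of the stake, indeed below $1$ if it stakes nearly everything, since then its ``$-1$''-successor is $\le 0$), whereas an $N_k$ staking conservatively -- the only kind needing many ``$-1$'' moves -- also fattens only slowly on ``$+1$'' runs and, by the argument above, must consume a comparable amount each time it is forced down, so that one of ``$N_k$ is already near bankruptcy'', ``$N_k$ can be felled cheaply'', or ``$N_k$ is not really consuming'' always applies. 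Making this dichotomy quantitative and weaving the ``$+1$'' recovery runs and the dovetailing schedule around it so that the prefix sums never drop below $0$ yet still tend to $\infty$ is the technical heart of the proof; once done, $x$ witnesses the anti-implication.
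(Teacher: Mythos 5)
Your setup (the all-ones martingale $M_1$, the integrality observation that a $\fZ$-supermartingale forced below capital $1$ must wager $0$ forever and hence consume less than $1$ more, and the reduction to preventing infinite consumption rather than bankrupting every $N_k$) is sound and matches the paper's starting point. But the core of your construction is missing, and you say so yourself: the paragraph beginning ``The step I expect to be the real obstacle'' correctly diagnoses that your countering rule cannot work and then replaces the fix with ``making this dichotomy quantitative \dots is the technical heart of the proof.'' That heart is exactly what the theorem requires. Concretely, the rule ``play the successor on which $N_k$ does not profit'' fails against an $N_k$ that enters with capital $C\gg M_1(\varepsilon)$ and wagers $+1$ every step: every $-1$ you play costs $M_1$ and $N_k$ one unit each, and every $+1$ you play to rebuild $M_1$ feeds $N_k$ equally, so the deficit $N_k - M_1$ is invariant under your moves and can only be eroded by $N_k$'s own consumption. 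Turning that erosion into a terminating procedure, while simultaneously keeping $M_1$ solvent, sending $M_1\to\infty$, and handling all the other $N_j$ that evolve during $N_k$'s turns, is the entire difficulty, and your sketch offers no mechanism for it.

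The paper's mechanism is a genuinely different decision rule, not a patch on sign-countering. Setting $m_1(n)=M(x\restriction n)$, it performs the division with remainder $s_e(n)=q_e(n)m_e(n)+r_e(n)$ with $q_e(n)=\lfloor s_e(n)/m_e(n)\rfloor$, uses the nested budgets $m_{e+1}(n)=m_e(n)-r_e(n)$ to decouple the different opponents, picks $i(n)=\min\set{e: s'_e(n)\neq q_e(n)}$, and plays $+1$ iff $s'_{i}(n)\le q_i(n)$. Thus the casino deliberately lets $S_e$ profit by up to $q_e(n)$ per step (since $M$ also gains $1$, the quotient does not increase), and plays $-1$ only when $S_e$ overreaches, in which case $S_e$ loses more than $q_e(n)$ while $M$ loses only $1$, so $\tuple{q_e(n),r_e(n)}$ strictly drops; each unit of consumption also forces a strict lexicographic drop. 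Well-foundedness of the decreasing words $\tuple{q_e(n),r_e(n)}_{e\le i(n)}$ then yields both that no $S_e$ consumes infinitely and (via the auxiliary constant martingales) that $M\to\infty$. Your proposal never identifies a potential function of this kind, so as written it is an accurate description of the problem rather than a proof.
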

The proof follows the main lines of Teutsch's original proof while correcting a minor flaw.\footnote{With the notation of the original proof \citep[p. 150]{teutsch2013}, a failure may occur when $S$'s wager is $q-1$, $r=1$, and $X(n+1)=\mathtt t$. In this case $q'=q+1$ (and $r'=m'=m-1$), violating the supposed invariants, Things I and II.} We offer here a concise correction. The reader is referred to the original proof for a detailed exposition. 
\begin{proof} 
Let $\{S_e\}_{e=1}^\infty$ be an arbitrary sequence of $\fZ$-supermartingales. Let $M$ be the $\set{1}$-martingale with initial capital $1$. We construct a sequence $x\in\set{+1,-1}^{\fN}$ on which $M$ makes $\infty$-\textsc{gains} and none of the $\{S_e\}_{e=1}^\infty$ makes $\infty$-\textsc{consumption}.

We may assume w.l.o.g. that $\{S_e\}_{e=1}^\infty$ are integer-valued, because if they were not so, then $\set{\lceil S_e\rceil}_{e=1}^\infty $ would be. Also, we may include among $\{S_e\}_{e=1}^\infty$ constant martingales of arbitrarily large capital, and further assume w.l.o.g. that $\{S_e\}_{e=1}^\infty$ are non-negative. We define a sequence $x$ recursively. Assume $x\restriction n$ is already defined. Define the following integer-valued functions:
\begin{align*}
s_e(n)&:= S_e(x\restriction n),\\
s'_e(n)&:= S'_e(x\restriction n),\\
f_e(n)&:= \tilde S_e(x\restriction n)-S_e(x\restriction n),\\
m_1(n)&:= M(x\restriction n),\\
m_{e+1}(n)&:= m_e(n)-r_e(n), \\
\shortintertext{where}
q_e(n)&:=\left\lfloor \frac{s_e(n)}{m_e(n)}\right\rfloor,\\
r_e(n)&:=s_e(n)-q_e(n)m_e(n).
\end{align*}

The above is well defined as long as $m_1(n)> 0$, and in that case we have 
\begin{equation}\label{eq m}
m_1(n)\geq m_2(n)\geq\cdots \geq 1.
\end{equation}
The definition of $m_1,m_2,\ldots$ is the main departure from Teutsch's construction, where the same $m$ is used against all of the $S_e$s.
 
Let $i=i(n)=\min\set{e:s'_e(n)\neq q_e(n)}$. Note that the set in the definition of $i$ is not empty, since $\set{m_e(n)}_{e=1}^\infty$ is bounded (by $m_1(n)$) and $\set{s_{e}(n):s'_e(n)=0}$ is not bounded, because $\{S_e\}_{e=1}^\infty$ include arbitrarily large constant martingales.  

We are now ready to define 
\[
x_{n+1}=
\begin{cases}
+1 &\text{if $s'_{i}(n)< q_i(n)$,}\\
-1 &\text{if $s'_{i}(n)> q_i(n)$.}
\end{cases}
\]
The following properties follow by induction on $n$:
\begin{enumerate}[(i)]
\item $m_1(n)>0$ (hence $m_1(n)\geq m_2(n)\geq\cdots \geq 1$, by \eqref{eq m});
\item for every $e < i(n)$, the pair $\langle q_e(n+1),r_e(n+1) \rangle$ is lexicographically not greater than $\langle q_e(n),r_e(n) \rangle$, with strict inequality if $S_e$ consumes money at time $n$;
\item $\langle q_i(n+1),r_i(n+1) \rangle$ is lexicographically strictly less than $\langle q_i(n),r_i(n) \rangle$.
\end{enumerate}

The only delicate point to notice when verifying (i)--(iii) is (ii), in the case where $S_e$ does not consume and $r_e(n)=m_e(n)-1$ (this is where Teutsch's proof fails). Before proving the delicate case, we first explain how the proof of Theorem~\ref{thm Z gain not to save} is concluded by assuming (i)--(iii).

From (ii) and (iii) the sequence of finite sequences
\[
\set{\langle q_e(n),r_e(n)\rangle_{e=1}^{i(n)}}_{n=1}^\infty
\]
is strictly decreasing. Since natural numbers cannot decrease indefinitely, it must be the case that $\lim_{n\to\infty}i(n)=\infty$ and each $\langle q_e(n),r_e(n)\rangle$ is fixed for $n$ large enough. It follows from (ii) that none of $\set{S_e}_{e=1}^\infty$ achieves $\infty$-\textsc{consumption}. Since $\lim_{n\to\infty}i(n)=\infty$ and $\set{S_e}_{e=1}^\infty$ include arbitrarily large constants, $M$'s capital must go to $\infty$.

It remains to verify (ii) in the case where $S_e$ does not consume and $r_e(n)=m_e(n)-1$. In this case, if $x_{n+1}=-1$, we would have $q_e(n+1)=q_e(n)+1$ (and $r_e(n+1)=0$). Fortunately, this situation is avoided by the definition of $m_e(n)$. If $r_e(n)=m_e(n)-1$, then $m_{e+1}(n)=1$; hence, by \eqref{eq m}, $m_i(n)=1$; hence, by the definition of $x$ and the assumption that $S_i$ is non-negative integer-valued, we get $x_{n+1}=+1$.
\end{proof}

The next theorem explains the separation between integer $\infty$-\textsc{consumption} and \textsc{oscillation}.
\begin{thm}\label{thm V save not to oscillate}
$(\set{1},\infty\text{-\textsc{consumption}})$ anti-implies $(\fZ,\text{\textsc{oscillation}})$.
\end{thm}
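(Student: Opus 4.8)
The plan is to first simplify the right‑hand side. By Proposition~\ref{prop V to Z oscillate} we have $(V,\text{\textsc{oscillation}})\longrightarrow(\set{0,-1,+1},\text{\textsc{oscillation}})$, so by Lemma~\ref{lem anti} (taken with $\mathcal C_1=(\set{1},\infty\text{-\textsc{consumption}})$, $\mathcal C_2=(V,\text{\textsc{oscillation}})$, $\mathcal C_3=(\set{0,-1,+1},\text{\textsc{oscillation}})$) it suffices to prove
\[
(\set{1},\infty\text{-\textsc{consumption}})\centernot\longrightarrow(\set{0,-1,+1},\text{\textsc{oscillation}}).
\]
Working against $\set{0,-1,+1}$‑martingales is the whole point: such a martingale moves its accumulated capital by at most $1$ per step, so it cannot use a large wager to absorb a single adverse bit.

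\textbf{Choice of $M_1$.} I would take $M_1$ to be the $\set{1}$‑supermartingale that always wagers $1$ on ``$+1$'', with initial capital $a$ (a suitable constant) and marginal consumption $c(\sigma)=\max(0,\,M_1(\sigma)-a)$ — it spends everything above $a$. Every increment of $M_1$ equals $1$, so $M_1$ is trivially history‑independent, and it is clearly computable. A direct computation gives $M_1(x\restriction n)=v_n$ with $v_0=a$ and $v_{n+1}=\min(v_n,a)+x_{n+1}$, so $v_n\in\set{1,\dots,a+1}$ — whence $M_1$ never goes bankrupt — provided the ``$-1$''s of $x$ occur in short runs separated by enough ``$+1$''s; and the accumulated consumption along $x$ equals $\tilde M_1(x\restriction n)-M_1(x\restriction n)=a+S_n-v_n$, where $S_n=\sum_{i\le n}x_i$. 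Hence $M_1$ achieves $\infty$-\textsc{consumption} on every path whose ``$-1$''s are suitably isolated and whose partial sums satisfy $S_n\to\infty$; these are the only two things the construction of $x$ must guarantee for the $M_1$‑side.

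\textbf{Construction of $x$.} Fix an enumeration $\mathcal B=\set{M^{(1)},M^{(2)},\dots}$ of the given $\set{0,-1,+1}$‑martingales. I would build $x$ by appending ``$+1$'' at almost every step, reserving only a sparse set of ``$-1$''s — at most one per block in a sequence of exponentially growing blocks, so that there are $O(\log n)$ of them among the first $n$ bits. Then automatically $S_n\ge n-O(\log n)\to\infty$ (uniformly in $\mathcal B$) and the ``$-1$''s are isolated, so $M_1$ already achieves $\infty$-\textsc{consumption}; all the work goes into defeating $\mathcal B$. The ``$-1$''s are placed by a priority/diagonalization argument: whenever we are ``working on'' $M^{(j)}$ we append ``$-1$'' exactly at a step where $M^{(j)}$ currently wagers on ``$+1$'', turning a prospective gain of $1$ into a loss of $1$; since on a run of ``$+1$''s a wager on ``$-1$'' also loses $1$, the accumulated capital of $M^{(j)}$ is forced to be non‑increasing for as long as we work on it, so $M^{(j)}$ is driven either into bankruptcy or into wagering $0$ permanently — in either case it does not achieve \textsc{oscillation}. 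We cycle through $M^{(1)},M^{(2)},\dots$ in round‑robin fashion, so each is worked on infinitely often, and between these phases we append long stretches of ``$+1$''s to keep $S_n$ growing and the ``$-1$''s sparse.

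\textbf{The main obstacle.} The heart of the argument is the interaction between requirements: a ``$-1$'' placed while working on $M^{(j)}$ is seen by every $M^{(i)}$, resetting its history, and during the long ``$+1$''‑stretches spent on other martingales $M^{(i)}$ may win, recover capital, and even start oscillating again with a larger amplitude — which forces us to re‑attack it at a further cost in ``$-1$''s. Proving that nonetheless every $M^{(i)}$ is ultimately pinned down while the ``$-1$''s stay as sparse as claimed (sparse enough that $S_n\to\infty$ at a rate beating $M_1$'s fixed consumption) is the crux, and the bookkeeping must be done carefully, choosing the block sizes adaptively so that the block currently devoted to $M^{(j)}$ is long enough to re‑pin it yet contributes only few ``$-1$''s. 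The two facts I would lean on are: (a) a $\set{0,-1,+1}$‑martingale cannot cancel an adverse bit, so a stretch on which we force its capital non‑increasing costs at most its current capital in ``$-1$''s; and (b) a martingale whose accumulated capital grows without bound does \emph{not} oscillate, so ``recovery all the way to $+\infty$'' is harmless — only bounded re‑oscillation is a threat, and that is exactly what being repeatedly forced non‑increasing on recurring stretches is meant to rule out. Combining (a), (b) with the round‑robin schedule into a construction where all these counts provably balance is where the real effort lies.
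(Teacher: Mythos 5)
Your reduction to $\set{0,-1,+1}$-martingales via Proposition~\ref{prop V to Z oscillate} and Lemma~\ref{lem anti} is legitimate (the paper instead handles $V$-martingales directly), and your observations (a) and (b) are the right starting points. But the proposal has a fatal flaw in the choice of $M_1$, and separately it leaves the combinatorial core unproved.

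The flaw: because your $M_1$ consumes everything above the fixed cap $a$, its retained capital obeys $v_{n+1}=\min(v_n,a)+x_{n+1}\le a+1$, so $v_{n+1}-v_n\le x_{n+1}$ and avoiding bankruptcy ($v_n\ge 1$) forces $\sum_{i=m+1}^{n}x_i\ge v_n-v_m\ge -a$ for \emph{every} window $(m,n]$. Now take the single $\set{-1,+1}$-martingale $S$ with initial capital $2a+2$ that wagers $-1$ until its capital drops to $a+1$, then wagers $+1$ until its capital returns to $2a+2$, and alternates forever. In each mode $S$'s capital equals its value at the start of the mode plus or minus the window's partial sum; by the window bound it never strays more than $a$ the wrong way (so it never goes bankrupt), and since $\infty$-\textsc{consumption} for your $M_1$ requires $\sum_{i\le n}x_i\to\infty$, every mode terminates. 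Hence $S$ oscillates between $a+1$ and $2a+2$ on \emph{every} $x$ on which $M_1$ achieves $\infty$-\textsc{consumption}: your $M_1$ is already defeated by the one-element set $\mathcal B=\set{S}$. The cure is exactly the paper's consumption function $f(\sigma)=\floor{\frac 1 2\max_{0\le k\le\norm{\sigma}}M(\sigma\restriction k)}$, which leaves an \emph{unbounded} retained capital $m=M-f$, so the construction can afford longer and longer adverse runs as it attacks later requirements (the attack on requirement $e$ is permitted only when $m>e$).

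Second, the paragraph you label ``the main obstacle'' is the theorem: you describe the re-oscillation difficulty but do not resolve it, and the round-robin/exponential-block scheme carries no termination argument. The paper's resolution is structurally different and is the real idea of the proof: requirements are \emph{pairs} $(S_e,k_e)$ of a martingale and an integer level; $(S_e,k_e)$ receives attention only when $S_e(x\restriction n)\le k_e$, $S'_e(x\restriction n)\ne 0$, and $m(x\restriction n)>e$; and progress is measured by the tuple $\tuple{\min\set{k_e,\floor{S_e(x\restriction n)}}}_{e\le L}$, which is lexicographically non-increasing along a suitable infinite set of times and hence eventually constant. This simultaneously shows that each $S$ eventually stops crossing each level $k$ (which for $V$-martingales rules out oscillation) and that $\liminf m=\infty$. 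Without some such well-founded progress measure, your ``re-attack at further cost'' loop has no reason to terminate, so the proposal as written does not constitute a proof.
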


\begin{proof}
Let $M$ be the $\set{1}$-martingale with initial capital $1$. That is, 
\[
M(\sigma) =1+\sum_{k=1}^n \sigma_k\ ,
\]
for any non-negative integer $n$ and $\sigma\in\set{-1,+1}^n$.

Define a consumption function
\begin{align*}
f(\sigma)&= \left\lfloor\frac 1 2 \max_{0\leq k\leq n}M(\sigma\restriction k)\right\rfloor,\\
\shortintertext{and a supermartingale}
m(\sigma)&=M(\sigma)-f(\sigma)
\end{align*}
that will provide the separation.

Note that $M$,$m$ and $f$ are defined such that they all diverge to $\infty$ on the same set of  sequences and $f$ increases only after two consecutive $+1$s and never increases in two consecutive periods.

Let $\mathcal B$ be a countable set of $\fZ$-martingales. Assume w.l.o.g.\ that the members of $\mathcal B$ are non-negative proper martingales with initial integer values. When a $\fZ$-martingale oscillates there is an integer between its limits superior and inferior; thus we can break the task of oscillation into countably many smaller tasks of oscillating around a given integer, and distribute these tasks among countably many copies of that martingale. Formally, we associate each pair $(S,k)\in\mathcal B\times \fN$ with the goal of achieving 
\[
\liminf_{n\to\infty} S(x\restriction n)\leq k<\limsup_{n\to\infty} S(x\restriction n). 
\]

For convenience, the elements of $\mathcal B\times \fN$ are arranged in a sequence $\set{(S_e,k_e)}_{e=1}^\infty$, such that whenever $e<e'$ and $S_e=S_{e'}$, then $k_e<k_{e'}$. 

We define a sequence $x\in\set{-1,+1}^\fN$ recursively. Assume $x\restriction n$ is defined for some $n\geq 0$. We say that $(S_e,k_e)$ \emph{receives attention} at time $n$, if $e$ is minimal with respect to the following properties:
\begin{enumerate}[(i)]
\item $S_e(x\restriction n)\leq k_e$,
\item $m(x\restriction n) > e$,
\item $S'_e(x\restriction n)\neq 0$.
\end{enumerate}
Define
\[
x_{n+1}= \begin{cases}
-\sign S'_e(x\restriction n)& \text{if some $(S_e,k_e)$ receives attention at time $n$,}\\
+1&\text{otherwise.}
\end{cases}
\]

By (ii), no $(S_e,k_e)$ receives attention at time $n$ when $m(x\restriction n)=1$; therefore, $m(x\restriction t)\geq 1$, for every $t$.
 
Let $L$ be an arbitrary integer satisfying $1\leq L \leq \liminf_{n\to\infty} m(x\restriction n)$. Since $m$ is integer-valued there exists some $n_0\in\fN$ such that $m(x\restriction n)\geq L$, for every $n\geq n_0$. Note that $m(x\restriction n+1)=m(x\restriction n)$ only if $x_{n+1}=1$ and $f(x\restriction n+1)=f(x\restriction n)+1$. Since $f$ never increases over two consecutive periods, we never have $m(x\restriction n)=m(x\restriction n+1)=m(x\restriction n+2)$, and, therefore, the set of indexes $I=\set{n\geq n_0:\\m(x\restriction n)> L}$ is infinite. 

Consider the sequence of $L$-tuples of integers $\set{a_n}_{n\in I}$ defined by
\[
a_n=\tuple{\min\set{k_e,S_e(x\restriction n)}}_{e=1}^L.
\]  

The proof will be concluded by showing that 
\begin{itemize}
\item[$(*)$] $a_n$ is non-increasing with respect to the lexicographic order on $\fZ^L$, and 
\item[$(**)$]it decreases whenever $m(x\restriction n+1)=L$.
\end{itemize}
It follows from $(*)$ that $a_n$ is fixed for $n$ large enough, and from $(**)$ that $m(x\restriction n)=L$ only finitely many times. By showing it for every $1\leq L \leq \liminf m(x\restriction n)$, we prove that $\liminf m(x\restriction n)=\infty$; therefore $m$ achieves $\infty$-\textsc{consumption}. Furthermore, for every $S\in\mathcal B$ and $k\in\fN$, $S$ does not oscillate around $k$ (by taking $L$ so large that $(S,k)\in\set{(S_1,k_1),\ldots,(S_L,k_L)}$); therefore none of the martingales in $\mathcal B$ oscillates.

It remains to prove $(*)$ and $(**)$. Let $n\in I$. That is, $m(x\restriction n)>L$. If $m(x\restriction n+1)>L$ then the successor of $n$ in $I$ is $n+1$ and according to our definition of $x$, $a_{n+1}$ is not lexicographically greater than $a_n$. If $m(x\restriction n+1)=L$ then $m(x\restriction n)=m(x\restriction n+2)=L+1$, since $f$ increases only after two consecutive $+1$s; that is, $n+2$ is $n$'s successor in $I$. By the definition of $x$, $a_{n+1}$ is strictly less than $a_n$, and so it remains to show that $a_{n+2}$ is not greater than $a_{n+1}$.

If some $(S_e,k_e)$ receives attention at time $n+1$, then $a_{n+2}$ is less than $a_{n+1}$. If not, then the only thing that can make $a_{n+2}$ increase is that $S'_L(x\restriction n+1)>0$ and $S_L(x\restriction n+1)<k_L$, but this is not possible because: (a) if $k_L=1$, then $S_L(x\restriction n+1)=0$, and so $S'_L(x\restriction n+1)=0$; (b) if $k_L>1$, then there is some $e<L$ such that $S_e=S_L$ and $k_{e}=k_L-1$, and so $(S_e,k_e)$ (or a smaller index) receives attention at time $n+1$. 
\end{proof}

A very simple history-independent strategy, betting $\frac 1 n$ at time $n$, separates between $\fQ$ and $V$ $\infty$-\textsc{gains}. 
\begin{thm}\label{thm R gain not to V gain}
$(\set{\frac 1 n}_{n=1}^\infty,\infty\text{-\textsc{gains}})$ anti-implies $(V,\infty\text{-\textsc{gains}})$.
\end{thm}

The proof of Theorem~\ref{thm R gain not to V gain} is probabilistic.\footnote{A working paper by \cite{bavly-peretz} provides a constructive proof.} In probability theory, martingales have a slightly different meaning than the algorithmic randomness meaning that is used in the present paper. To distinguish between the two, we call a \emph{martingale process} a sequence of integrable random variables $\set{X_n}_{n=0}^\infty$ adapted to a filtration $\mathcal F_0\subset \mathcal F_1\subset \cdots \subset \mathcal F$, such that
\[
\mathbb E[X_{n+1}|\mathcal F_n]=X_n, \quad\text{for all }n=0,1,2,\ldots
\]

We will utilize the following standard results from probability theory. 
\begin{thm}[Doob's martingale convergence]
Let $(X_n,\mathcal F_n)_{n=0}^\infty$ be a martingale process. If $\sup \set{\mathbb E|X_n|}_{n=0}^\infty<\infty$, then $\lim\limits_{n\to\infty} X_n=X_\infty$ exists (almost surely) and $\mathbb E |X_{\infty}|<\infty$.
\end{thm}
\begin{cor}[bounded second moment]
Let $(X_n,\mathcal F_n)_{n=0}^\infty$ be a martingale process. If $sup\set{\mathbb E(X_n)^2}_{n=0}^\infty<\infty$, then $\lim\limits_{n\to\infty} X_n$ exists and is finite almost surely.
\end{cor}
\begin{cor}[non-negative]
Let $(X_n,\mathcal F_n)_{n=0}^\infty$ be a martingale process. If all $X_n$ are non-negative (almost surely), then $\lim\limits_{n\to\infty} X_n$ exists and is finite almost surely.
\end{cor}
The interested reader is referred to \citet[Chapter 4]{shiryaev} for proofs and further discussion.

\begin{proof}[Proof of Theorem \ref{thm R gain not to V gain}.]
The separating $\set{\frac 1 n}_{n=1}^\infty$-martingale is very simple: it bets $\frac 1 n$ on $+1$ at time $n$. The more sophisticated part is constructing the separating binary sequence (given a countable set of $V$-martingales) and setting the initial value large enough (independently of the $V$-martingales).
 
Let $\epsilon_1,\epsilon_2,\dots$ be independent random variables assuming the values $\pm 1$ with probability $(\frac 1 2,\frac 1 2)$. The series $\sum_{i=1}^\infty \frac{\epsilon_i}{i}$ converges a.s., by Doob's martingale convergence theorem, since the finite sums have bounded second moments. Let $L>0$ be large enough that 
\begin{equation}\label{eq L}
\Pr\left(\norm{\sum_{i=N}^{N+K} \frac{\epsilon_i}{i}}< L,\ \forall N,K\in\fN\right)>0.
\end{equation}
Note that $L$ is a universal constant; it does not depend on the realization of $(\epsilon_1,\epsilon_2,\ldots)$.
 
Define a history-independent $\set{\frac 1 n}_{n=1}^\infty$-martingale  $S_0:\set{-1,+1}^{<\infty}\to\fQ$ to be 
\[
S_0(x_1,\ldots,x_n)=(L+2)+\sum_{i=1}^n \frac {x_i} i.
\]

Let $\set{S_1,S_2,\ldots}$ be a countable set of $V$-supermartingales. Assume w.l.o.g. that $S_i(\sigma)\geq 0$, for every $i\geq 1$ and  $\sigma\in\set{-1,+1}^{<\infty}$.

We shall construct a random process $x_1,x_2,\ldots$, adapted to $\epsilon_1,\epsilon_2,\ldots$ (namely, each $x_n$ is a function of $\epsilon_1,\ldots,\epsilon_n$) that satisfies the following properties:
\begin{enumerate}[(i)]
\item $\limsup\limits_{n\to\infty} S_j(x\restriction n)<\infty$, for all $j\geq 1$ (almost surely);
\item $\lim\limits_{n\to\infty} S_0(x\restriction n)=\infty$ (almost surely);
\item $\Pr(\inf_{n\in\fN} S_0(x\restriction n) \geq 1)>0$.
\end{enumerate}

The definition of $x_1,x_2,\ldots$ relies on an increasing sequence of stopping times $0=n_0(x)< n_1(x)<\dots$ defined recursively, for any realization of $x\in\set{-1,+1}^{\fN}$. In odd intervals, between times $n_{2i}$ and $n_{2i+1}$, $x_n$ is chosen randomly by setting it to $\epsilon_n$. In even intervals, between times $n_{2i+1}$ and $n_{2i+2}$, $x_n$ is determined as a function of $x_1\ldots,x_{n-1}$. In both cases we make sure that given that $n_i$ is finite, $n_{i+1}$ will be finite as well (almost surely). 

The purpose of the random intervals is to gain time. The martingale convergence theorem ensures that none of the martingales change much during each random interval. As time progresses $S_0$ gains an advantage over the $V$-martingales, since its wagers become smaller; and so it can endure more losses. By the end of the $i$th random interval, $S_0$ can incur more losses than $S_1,\ldots,S_i$ together without going bankrupt. This is achieved by requiring that $\sum_{j=1}^iS_j(x\restriction n_i)<n_i$, and so $S_0$ never accumulates losses of more than 1 during a deterministic interval.

During the $i$th deterministic interval, the bits of $x$ prevent $\tuple{S_1,\ldots,S_i}$ from increasing (in the lexicographic order). At the same time, $S_0$ may incur losses, but it does not go bankrupt thanks to the advantage it gained during the preceding random interval. Since non-negative $V$-martingales cannot decrease indefinitely, they must stop betting at some point. When none of $S_1,\ldots,S_i$ bets, $x_n$ is set to be $+1$, and so it is guaranteed that $S_0$ will eventually regain its losses plus a positive amount of $L$, which is the stopping condition of the deterministic interval. 

The choice of $L$ was made so that there would be an event of positive probability in which $S_0$ never accumulates losses of more than $L$ in any random interval. In the proceeding deterministic interval $S_0$ can accumulate additional losses of at most 1, and by the end of the deterministic interval it re-gains all its losses from the two time intervals (Figure~\ref{fig S_0}). By setting $S_0(\varepsilon)=2+L$, it is guaranteed that $S_0$ never goes below 1 and, therefore, does not go bankrupt (verifying (iii)).

\begin{figure}
\begin{center}
\includegraphics[clip=true, trim= 3cm 17cm 3cm 8cm, width=\textwidth]{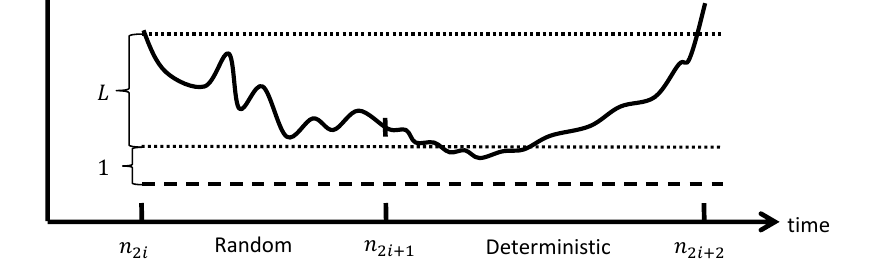}
\caption{\label{fig S_0}A typical $S_0$ path during two consecutive time intervals showing that $S_0$ never drops by more than $L+1$ during that period and always re-gains its losses by the end of the period.}
\end{center}
\end{figure}

Finally, consider the accumulated gains. The martingale convergence theorem guarantees that the accumulated gains (or loses) during the random intervals converge, for all of the martingales. The deterministic intervals are designed so that the accumulated gains of each of the $V$-martingales are eventually non-increasing, whereas the gains of $S_0$ increase by at least $L>0$ in every deterministic interval (verifying (ii)).

We now formalize the above arguments. Let $n_0\equiv 0$. For $i>0$, let
\begin{align*}
n_{2i-1}(x)&=\inf\set{t>n_{2i-2}(x):\ \sum_{j=1}^iS_j(x\restriction t)<t}\text{ and}\\
n_{2i}(x)&=\inf\set{t>n_{2i-1}(x):\ \sum_{n=n_{2i-1}(x)}^t \frac{x_n} n \geq L},
\end{align*}
with the convention that the infimum of the empty set is $+\infty$.

For $i\in\fN\cup\set{0}$ and $n_{2i}(x)< n \leq n_{2i+1}(x)$, set $x_{n}=\epsilon_n$ (note that $n_i$ are stopping times; hence $i$ is a function of $x\restriction n-1$).

For $n_{2i+1}(x) < n \leq n_{2i+2}(x)$, the value of $x_{n}$ will be determined by $x_1,\ldots,x_{n-1}$, as follows.

Let
\[
j=j(n)=\inf\set{e:\ \exists t,\,n_{2i+1}(x)\leq t\leq n,\ S_e'(x_1,\ldots,x_t)\neq 0}.
\]
With $i$ and $j$ as given above, define
\[
x_{n+1}=
\begin{cases}
-1 &\text{if $j\leq i+1$ (in particular, $j$ is finite) and $S_j'(x_1,\ldots,x_n)>0$,}\\
+1 &\text{otherwise.}
\end{cases}
\]

Let us call the union of all random/deterministic intervals the \emph{random/ deterministic phase}. Express $S_j$ ($j=0,1,\ldots$) as a sum of two martingales, $S_j^{r}+S_j^{d}$, where $S_j^r$ is active only during the random phase and $S_j^d$ only during the deterministic phase. 

Formally, define $T^{r}(x)=\set{n:\exists i,\, n_{2i}(x)<n\leq n_{2i+1}(x)}$ and $T^{d}(x)=\fN\setminus T^d$, and define $S_j^{r},S_j^{d}:\set{-1,+1}^{<\infty}\to\fQ$, for $j\geq 0$, recursively by
\begin{align*}
S_j^{r}(\varepsilon)&=S_j(\varepsilon),\\
S_j^{r}(x\restriction n)-S_j^{r}(x\restriction n-1) &= 
\begin{cases}
S_j(x\restriction n)-S_j(x\restriction n-1),&\text{if $n\in T^{r}$,}\\
0                                          &\text{if $n\in T^{d}$,}\\
\end{cases}\\
\shortintertext{and}
S_j^{d} &= S_j-S_j^{r}.
\end{align*}

By Doob's martingale convergence theorem (using that the $S_j^r$s are non-negative martingale processes, for $j>0$, and $S_0^r$ has bounded second moments), for almost every $x$,
\begin{align}\label{eq S^r}
\forall j\geq 0\ \lim_{n\to\infty}S^{r}_j(x\restriction n)<\infty &&\text{(and the limits exist).}
\end{align}
Also, if $n_{2i}(x)$ is finite, then so is $n_{2i+1}(x)$ (almost surely). 

We turn now to analyze the deterministic phase. Denote the $i$th deterministic time interval by $T^d_i(x):= \set{n: n_{2i+1}(x) <n\leq n_{2i+2}(x)}$. The definition of $x$ in the deterministic phase is such that $\set{\tuple{S_j(x\restriction n)}_{j=1}^i}_{n\in T_i^d}$ is non-increasing (lexicographically, as $n$ grows), and it decreases each time $x_n=-1$; therefore it can decrease at most $\sum_{j=1}^{i+1}S_j(x\restriction n_{2i+1}(x))\leq n_{2i+1}(x)$ times, and so, if $n_{2i+1}(x)$ is finite, so is $n_{2i+2}(x)$. Since, for every $i\in\fN$, $\tuple{\floor{S^d_j(x\restriction n)}}_{j=1}^i$ is lexicographically eventually non-increasing (specifically, for every $n\geq n_{2i+1}$) and since natural numbers cannot decrease indefinitely, each one of the $S^{d}_j$s is eventually non-increasing (for every $j\geq 1$) and hence convergent, and 
\[
\lim\limits_{n\to\infty} S_j(x\restriction n)=\lim\limits_{n\to\infty} S_j^r(x\restriction n)+\lim\limits_{n\to\infty} S_j^d(x\restriction n)<\infty,\quad\text{for all $j\geq 1$,}
\]
verifying (i).

By the definition of $n_{2i+2}$, 
\begin{equation}\label{eq S^d}
S_0^{d}(x\restriction n_{2i+2})-S_0^{d}(x\restriction n_{2i})=S_0^{d}(x\restriction n_{2i+2})-S_0^{d}(x\restriction n_{2i+1})\geq L>0.
\end{equation}
Since the number of periods $n\in T_i^d$ in which $x_n=-1$ is at most $n_{2i+1}(x)$, $S_0^{d}(x\restriction n) - S_0^{d}(x\restriction n_{2i})\geq -1$, for every $n_{2i}<n\leq n_{2i+2}$; therefore 
\begin{align}\label{eq S0d}
S_0^{d}(x\restriction n)\geq iL-1,&&\text{whenever $n\geq n_{2i}$.}
\end{align}

By \eqref{eq L} there is an event of positive probability, in which 
\begin{align}\label{eq S0r}
S_0^{r}(x\restriction n)-S_0^r(\varepsilon)\geq -(i+1)L,&&\text{whenever $n\geq n_{2i}$.}
\end{align}
Combining \eqref{eq S0d}, \eqref{eq S0r}, and the fact that $S_0^r(\varepsilon)=S_0(\varepsilon)=L+2$ yields $\inf_{n}S_0(x\restriction n)\geq 1$, which concludes (iii).

Finally, $\lim_{n\to\infty}S^d_0(x\restriction n)=\infty$ by \eqref{eq S0d} and $\sup_{n\in\mathbb N}|S^r_0(x\restriction n)|<\infty$ by \eqref{eq S^r}; therefore $\lim S_0(x\restriction n)\to \infty$, as $n\to\infty$ (almost surely), concluding (ii) and the whole proof.
\end{proof}

The separation between $V$ and $\fZ$ $\infty$-\textsc{gains} is made by betting $1+\frac 1 n$ at time $n$.
\begin{thm}\label{thm V gain not to Z gain}
$(\set{1+\frac 1 n}_{n=1}^\infty,\infty\text{-\textsc{gains}})$ anti-implies $(\fZ,\infty\text{-\textsc{gains}})$.
\end{thm}

\begin{proof}
Define a history-independent $\set{1+\frac 1 n}_{n=1}^\infty$-martingale $M$ by
\begin{align*}
M(\varepsilon) = 2,\\
M'(\cdot\restriction n)=1+\frac 1 n. 
\end{align*}
Denote the harmonic sum $H(n)=\sum_{k=1}^n\frac 1 k$. For $y\in\set{-1,+1}^\fN$, we define the number of consecutive losses that $M$ can incur before going bankrupt after betting against $y\restriction n$ as 
\begin{align*}
K(y\restriction n)&=k_n(M(y\restriction n))\text{, where}\\
k_n(m)&= \max\set{k: k + H(n+k)-H(n) \leq m}.
\end{align*}

The function $K$ turns out to be a useful currency for measuring $M$'s capital. It is an integer-valued submartingale ($-K$ is a supermartingale). It decreases by 1 upon seeing $-1$, increases by at least 1 upon seeing $+1$, and increases by $n+\omega(1)$ upon seeing $n$ consecutive +1s. 

The above $\omega(1)$ can be interpreted as a production function (negative consumption function). Similar to the way in which the $\{1\}$-martingale anti-implies  $(\fZ,\infty\text{-\textsc{consumption}})$, $K$ anti-implies $(\fZ,\infty\text{-\textsc{gains}})$, except that here $K$'s production plays the role of consumption in \cite{teutsch2013}.  

Let us begin with an informal sketch of the proof. Let $\set{S_1,S_2,\ldots}$ be a countable set of $\fZ$-supermartingales. The unboundedness of $K$'s production ensures that even if $S_1$ bets proportionally to $K$ (i.e., $S'_1=S_1/K$) the ratio $S_1/K$ can be forced downwards until $K>S_1$ eventually. At this stage $M$ has sufficient capital to prevent $S_1$ from making gains and this becomes the primary goal. The secondary goal, which is pursued whenever $S_1$ bets 0, is to minimize the ratio $S_2/(K-S_1)$ until $K>S_1+S_2$. Inductively, stage $e$ begins when $K>S_1+\cdots +S_{e-1}$. The primary goal is to prevent $S_1,\ldots,S_{e-1}$ from making gains. Once the primary goal is achieved, the secondary goal is pursued, minimizing the ratio $S_e/(K-S_1-\cdots -S_{e-1})$ until $K>S_1+\cdots+S_e$.

We now formalize the above idea. As usual, assume w.l.o.g. that $S_i(\sigma)\geq 0$ for every $i\geq 1$, and  $\sigma\in\set{-1,+1}^{<\infty}$. Assume, also (for technical reasons that will become clear), without loss of generality that $\set{S_1,S_2,\ldots}$ include constant martingales of arbitrarily large capital.  

We define a sequence $x\in\set{-1,+1}^\fN$ recursively. Assume by induction that $x\restriction n$ is already defined. Let
\begin{align*}
e(n)&=\min\set{e:\ K(x\restriction n) \leq  S_1(x\restriction n)+\cdots+S_{e}(x\restriction n)}\text{, and}\\\
k(n)&=K(x\restriction n)-\left(S_1(x\restriction n)+\cdots+S_{e(n)-1}(x\restriction n)\right).
\end{align*}
By the assumption that $\set{S_1,S_2\ldots}$ include arbitrarily large constants, $e(n)$ is well defined. If there is an index $1\leq i< e(n)$ such that $S'_i(x\restriction n)\neq 0$, let $j$ be the minimum of these indexes, and let
\[
x_{n+1}=-\sign S'_j(x\restriction n).
\]
Note that in this case 
\begin{equation}\label{eq e > j}
e(n+1)> j.
\end{equation}
Assume now that $S'_1(x\restriction n)=\cdots=S'_{e(n)-1}(x\restriction n)=0$ (or $e(n)=1$). Define
\[
x_{n+1}=
\begin{cases} +1 &\text{if $S'_{e(n)}(x\restriction n)\leq \frac {S_{e(n)}(x\restriction n)}{k(n)}$,}\\
-1 & \text{if $S'_{e(n)}(x\restriction n)> \frac {S_{e(n)}(x\restriction n)}{k(n)}$.}
\end{cases}
\]
Note that in this case 
\begin{equation}\label{eq e > e}
e(n+1)\geq e(n).
\end{equation}

Since $\set{S_1,S_2,\ldots}$ include constant martingales of arbitrarily large capital, it is sufficient to prove that $\lim\limits_{n\to\infty}e(n)=\infty$ and $\lim\limits_{n\to\infty}S_i(x\restriction n)<\infty$ for every $i\in\fN$ (and that, in particular, the limits exist). 

Assume for the sake of contradiction that 
\[
e=\inf \left(\set{i:S_i(x\restriction n)\text{ diverges}}\cup\set{\liminf_{n\to\infty} e(n)}\right) < \infty.
\]
By the definition of $x$, for every $k<\liminf\limits_{n\to\infty}e(n)$, the sequence of $k$-tuples $a_n=\tuple{S_i(x\restriction n)}_{i=1}^{k}$ is eventually non-increasing in the lexicographic order; therefore it stabilizes, for $n$ large enough; therefore it remains to show that $\lim\limits_{n\to\infty}e(n)=\infty$ (since $e=\liminf\limits_{n\to\infty}e(n)$).

Take $n_0\in\fN$ such that $\forall n\geq n_0$, $S'_1(x\restriction n)=\cdots=S'_{e-1}(x\restriction n)=0$. By \eqref{eq e > j} and \eqref{eq e > e}, if $e(n)>e$, then so is $e(n+1)$; therefore $e(n)=e$, for all $n\geq n_0$. 

Denote the long division with a remainder of $\frac {S_{e}(x\restriction n)}{k(n)}$ by
\[
S_{e}(x\restriction n)=q(n)k(n)+r(n).
\]
For all $n\geq n_0$, we have $(q(n+1),r(n+1))\leq (q(n),r(n))$ (lexicographically) with a strict inequality if either $x_{n+1}=-1$ or $k(n+1)>k(n)+1$; therefore, for some $n_1\geq n_0$ and every $n\geq n_1$, $x_{n+1}=+1$ and $k(n+1)=k(n)+1$. We show that this is impossible by showing that $k(n_1)+l\ll k(n_1+l)$, as $l\to\infty$ (where $f(l)\ll g(l)$ means that $\lim_{l\to\infty}[g(l)-f(l)]=\infty$). 

Since $S_1,\ldots,S_{e-1}$ are constant after time $n_1$, it is sufficient to show that $K(x\restriction n_1)+l\ll K(x\restriction n_1+l)$ (as $l\to\infty$).
Let $m=M(x\restriction n_1)$ and $k=K(x\restriction n_1)$. We have 
\[
M(x\restriction n_1+l)= m+l + H(n_1+l)-H(n_1),
\]
and so
\begin{multline*}
K(x\restriction n_1+l)=\\
\max\set{k':k'+H(n_1 + l + k')-H(n_1 + l) \leq m+l + H(n_1+l)-H(n_1)}.
\end{multline*}
Plugging in $k'=k+l$, it suffices to show
\[
k+l+H(n_1+k+2l)-H(n_1+l)\ll m+l + H(n_1+l)-H(n_1).
\]
By eliminating constant terms and rearranging, we must show 
\[
H(n_1+k(n_1)+2l)-2H(n_1+l)\to -\infty,\quad\text{as $l\to\infty$.}
\]
The above follows from the estimate $\log(n-1)\leq H(n)\leq \log(n+1)$.
\end{proof}

\section*{Acknowledgments}
The author wishes to thank two anonymous referees for many useful comments that improved the presentation of the paper substantially, Aviv Keren and Gilad Bavly for comments and suggestions, and Andy Lewis-Pye for patiently explaining concepts of computability. The early stages of this work were done in 2010--12 while the author was a postdoctoral researcher at Tel Aviv University. The author wishes to thank his hosts Ehud Lehrer and Eilon Solan.


\begin{thebibliography}{}

\bibitem[Bavly and Peretz, 2015+]{bavly-peretz}
Bavly, G. and Peretz, R. (2015+).
\newblock How to gamble against all odds. 
\newblock {\em Games and Economic Behavior} (forthcoming).

\bibitem[Bienvenu et~al., 2012]{teutsch-etal2012}
Bienvenu, L., Stephan, F., and Teutsch, J. (2012).
\newblock How powerful are integer-valued martingales?
\newblock {\em Theory of Computing Systems} 51:330--351.

\bibitem[Chalcraft et~al., 2012]{chalcraft12}
Chalcraft, A., Dougherty, R., Freiling, C., and Teutsch, J. (2012).
\newblock How to build a probability-free casino.
\newblock {\em Information and Computation} 211(0):160--164.

\bibitem[Downey and Hirschfeldt, 2009]{downey-book}
Downey, R. G. and Hirschfeldt, D. R. (2009).
\newblock {\em Algorithmic Randomness and Complexity}.
\newblock Springer.

\bibitem[Downey and Riemann, 2007]{downey-online}
Downey, R. G. and Riemann, J. (2007).
\newblock Algorithmic Randomness.
\newblock {\em Scholarpedia} 2(10):2574, 
\newblock http://www.scholarpedia.org/article/algorithmic\_\linebreak{}randomness.

\bibitem[Kurtz, 1981]{kurtz81}
Kurtz, S. (1981).
\newblock {\em Randomness and genericity in the degrees of unsolvability}.
\newblock PhD thesis, University of Illinois at Urbana-Champaign.

\bibitem[Nies, 2009]{nies-book}
Nies, A. (2009).
\newblock {\em Computability and Randomness}.
\newblock Oxford University Press.

\bibitem[Schnorr, 1971]{schnorr71}
Schnorr, C.-P. (1971).
\newblock A unified approach to the definition of random sequences.
\newblock {\em Mathematical Systems Theory} 5(3):246--258.

\bibitem[Shiryaev, 1996]{shiryaev}
Shiryaev, A. N. (1996).
\newblock {\em Probability}, volume 95 of Graduate Texts in mathematics.
\newblock Springer.

\bibitem[Teutsch, 2014]{teutsch2013}
Teutsch, J. (2014).
\newblock A savings paradox for integer-valued martingales.
\newblock {\em International Journal of Game Theory} 43(1):145--151.

\end{thebibliography}
\end{document}